\newcommand{\R}{ {\mathbb{R}} }
\newcommand{\E}{ {\mathbb{E}} }
\renewcommand{\P}{ {\mathbb{P}} }
\newcommand{\D}{ {\mathbb{D}} }
\newcommand{\cR}{ {\mathcal{R}} }
\newcommand{\cH}{ {\mathcal{H}} }
\renewcommand{\and}{\quad\textrm{ and }\quad}
\newcommand{\C}{\mathcal{C}}
\newtheorem{theorem}{Theorem}[section]
\theoremstyle{plain}
\newtheorem{corollary}[theorem]{Corollary}
\newtheorem{lemma}[theorem]{Lemma}
\newtheorem{proposition}[theorem]{Proposition}
\newtheorem{remark}[theorem]{Remark}
\numberwithin{equation}{section}
\newcommand{\black}{\textcolor{black}}
\title{Non-uniqueness for reflected rough differential equations}
\author{Paul Gassiat}
\address{
Universit\'e Paris-Dauphine, PSL University, UMR 7534, CNRS, CEREMADE, 75016 Paris, France}
\email{gassiat@ceremade.dauphine.fr}
\begin{document}

\begin{abstract}
We give an example of a reflected differential equation which may have infinitely many solutions if the driving signal is rough enough (e.g. of infinite $p$-variation, for some $p>2$). For this equation, we identify a sharp condition on the modulus of continuity of the signal under which uniqueness holds. L\'evy's modulus for Brownian motion turns out to be a boundary case. We further show that in our example, non-uniqueness holds almost surely when the driving signal is a fractional Brownian motion with Hurst index $H < \frac{1}{2}$. The considered equation is driven by a two-dimensional signal with one component of bounded variation, so that rough path theory is not needed to make sense of the equation.\end{abstract}

\thanks{This work is partially supported by the ANR via the project ANR-16-CE40- 0020-01. The author would like to thank Joseph Lehec for a helpful discussion, and Cyril Labbé for useful comments. The author is also grateful to an anonymous referee for several remarks which helped to improve the clarity of the presentation.}

\maketitle


\section{Introduction}

We consider differential equations with normal reflection taking values in a closed domain $D \subset \R^d$ and driven by a signal $X$, which in general take the form
\begin{equation} \label{eq:RSDE}
dY_t = f(Y_t) dX_t + d\kappa_t, \;\;Y_0 = y_0,
\end{equation}
where the unknown is the pair $(Y,\kappa)$ which must satisfy the additional constraint
$$\forall t \geq 0, \;\; Y_t \in D, \;\;\; d\kappa_t =  1_{\{Y_t \in \partial D\}} n(Y_t) |d\kappa_t| $$
where $n(y)$ is an inner normal of $D$ at $y \in \partial D$. 
\smallskip

In the stochastic analysis literature, the driving signal $X$ is usually a (continuous) semimartingale, and the equation is understood in It\^o or Stratonovich sense. Existence and uniqueness of the solutions are then classical (see e.g. \cite{Tan79, LS84, Sai87}). In fact, in this context the difficult part is usually the existence, while the uniqueness is an almost immediate consequence of It\^o's formula, under some mild regularity assumption on $D$ (external ball condition).
\smallskip

However, these well-posedness results rely crucially on It\^o's calculus, and therefore are restricted to semimartingale signals. 
In contrast, Lyons' rough path theory \cite{Lyo98} provides a deterministic framework to define integrals (and solve differential equations) driven by signals $X$ of arbitrary low regularity (measured for instance by the index $p$ in the scale of $p$-variation spaces). 
The key idea of rough path theory is to lift $X$ to an enhanced object $\mathbf{X} = (X, \int X \otimes dX,\ldots)$ in a suitable metric space (depending on $p$) so that the solution of a differential equation driven by $X$ is then obtained as a \emph{continuous} function of $\mathbf{X}$
(we will not need rough path theory in this paper so we refrain from giving any more details).
In addition to the added robustness which is useful even when applied to the semimartingale framework, the flexibility of rough path theory means that it may be applied to a much broader class of random signals, such as for instance many Gaussian or Markovian processes, see e.g. \cite{FV} and references therein.
\smallskip

It is therefore interesting to understand to which extent a (rough) pathwise theory is possible for \eqref{eq:RSDE}. Let us summarize the results which are known so far. Existence results have been proven by Aida \cite{Aida15,Aida16} when $X$ is a rough path with finite $p$-variation ($p<3$), under essentially the same assumptions on $D$ as in the semimartingale case. In the Young case ($p<2$), uniqueness of solutions was obtained by Falkowski and S\l{}omi\'{n}ski \cite{FS15} (in the case $D = \R_+^{d_1} \times \R^{d_2}$) by a contraction mapping argument. The same result was then extended to mixed Young/semimartingale SDE by the same authors \cite{FS17}. In the rough case ($p <3$), uniqueness has been obtained in the one-dimensional case $D=\R_+$ by Deya et al. \cite{DGHT19}. Similar results to those mentioned above have also been obtained in the case when $D$ is allowed to depend on time, see e.g. \cite{FS15b,CMR17,RTT19}.
\smallskip

{\color{black}
However, the question of uniqueness in the case of rough signals ($p>2$) and multidimensional domains ($d \geq 2$) has so far remained open, and the main goal of this paper is to resolve it (negatively). For preciseness let us consider the following formal statement (for given integers $m, d \geq 1$ and scalar $p \geq 1$) :
\begin{align*}
\mbox{\textbf{Assertion (A)}}_{d,m,p} \;\;\; &\mbox{For any smooth domain $D \subset \R^d$, any rough path $X$ $\in$ $C_ {p-var}([0,1],\R^m)$, }\\
&\mbox{  any $f \in C^\infty \left( \R^d, L(\R^m,\R^d)\right)$ and any $y_0 \in \R^d$,}\\
&\mbox{ the equation \eqref{eq:RSDE} admits at most one solution $(Y,\kappa)$.}
\end{align*}

(Note that, if $p \geq 2$, in the above $X$ should be understood as $p$-rough path over a $\R^m$-valued path, with \eqref{eq:RSDE} being understood in the sense of rough integration.)

Then by the results mentioned in the previous paragraph, it is known that  \textbf{(A)}$_{d,m,p}$ holds if either $1 \leq p< 2$ \cite{FS15}, or if $d=1$ \cite{DGHT19}.  The main result of the present paper can be  summarized as follows.
\begin{theorem}
For any  $d \geq 2$, $m \geq 2$, $p >2$, the assertion {\rm\textbf{(A)}$_{d,m,p}$} is false.
\end{theorem}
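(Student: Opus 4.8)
The plan is to disprove the assertion by exhibiting, for each triple $(d,m,p)$ with $d,m\ge 2$ and $p>2$, a single explicit instance of \eqref{eq:RSDE} that admits two distinct solutions. I would first reduce to the base case $d=m=2$. Given a counterexample in the plane on a smooth domain $D_0\subset\R^2$ driven by a two-dimensional signal, one extends it to $\R^d$ by working on the product domain $D_0\times\R^{d-2}$ (which is again smooth, with the same inner normal) and letting $f$ act trivially on the extra coordinates, so that those coordinates stay constant and lie in $D$; the accommodation of $m>2$ is handled by appending vanishing components $X^3=\dots=X^m=0$, which have finite $p$-variation trivially, together with columns of $f$ that ignore them. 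Two planar solutions then lift to two distinct solutions of the extended equation. It therefore suffices to produce, for each fixed $p>2$, a planar reflected equation and a driver $X$ of finite $p$-variation for which uniqueness fails.

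For the planar model I would take the half-plane $D=\R\times\R_{\ge 0}$ with normal reflection in the $+e_2$ direction, and a driver $X=(X^1,X^2)$ in which $X^2$ has bounded variation (a deterministic drift) while $X^1$ is the genuinely rough component. The structural point is to let $X^1$ act only on the tangential coordinate $Y^1$, with a coefficient $\alpha(Y^2)$ depending only on the reflected coordinate $Y^2$, while $Y^2$ is driven by the bounded-variation data plus reflection alone. Then $Y^2$, being the Skorokhod reflection of a bounded-variation (indeed Lipschitz) input, stays of bounded variation, so $\alpha(Y^2_{\cdot})$ is of bounded variation in time and the only integral against $dX^1$ is the Young (Riemann--Stieltjes) integral $\int\alpha(Y^2)\,dX^1$ of a bounded-variation integrand against a continuous integrator; since moreover one component of $X$ has bounded variation, the rough-path lift of $X$ is canonical and \eqref{eq:RSDE} is unambiguous without any Lévy-area data. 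The creative heart is then to build a genuine feedback from $Y^1$ into the drift of $Y^2$ so that, near the wall $\{Y^2=0\}$, the process may \emph{detach} from the boundary in two different ways, the choice being governed by the fine-scale oscillations of $X^1$ through a Peano-type degeneracy of the coupled fixed point. I would write down two explicit candidate pairs $(Y,\kappa)$ and $(\td Y,\td\kappa)$ and verify directly that each satisfies the differential equation, the constraint $Y_t\in D$, and the support condition on $\kappa$.

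The quantitative heart is the modulus-of-continuity threshold. The gap between the two solutions should be controlled by a sum of increments of $X^1$ over a geometric sequence of scales $2^{-n}$; non-uniqueness survives precisely when this sum fails to be summable, which for the worst-case oscillation translates into a Dini-type criterion on the modulus of continuity of $X^1$, with Lévy's modulus $\sqrt{t\log(1/t)}$ appearing as the borderline. For a fixed $p>2$ I would realize the required roughness by a self-similar, $(1/p)$-Hölder lacunary path $X^1$, which has finite $p$-variation yet oscillates faster than Lévy's modulus (since $t^{1/p}\gg\sqrt{t\log(1/t)}$ as $t\to0$ whenever $1/p<\tfrac12$), so the two solutions are genuinely distinct. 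Running the same estimate along a sample path of fractional Brownian motion with $H<\tfrac12$ then yields the almost-sure statement quoted in the abstract.

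The main obstacle, and where I expect most of the work to lie, is the construction itself: designing $f$ so that (i) $X^1$ enters only through a bounded-variation coefficient, keeping every integral Young-integrable and the reflected coordinate of bounded variation, while simultaneously (ii) the reflection admits a genuine branching near the boundary, and then \emph{proving} that two distinct pairs both solve the equation with the correct reflection measure. The accompanying sharpness is delicate because it demands a two-sided Borel--Cantelli argument: one direction showing the branches separate when $X^1$ is rough enough, and the converse showing that uniqueness is restored under the complementary modulus condition.
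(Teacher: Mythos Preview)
Your reduction to the base case $d=m=2$ by trivial extension of the domain and the driver is fine and is exactly how the paper proceeds. The difficulty is that the remainder of your proposal is a research plan, not a proof: you never exhibit an equation, a driver, or two solutions. In fact the structure you first describe cannot work as stated. If the rough component $X^1$ acts only on the tangential coordinate $Y^1$ through a coefficient $\alpha(Y^2)$, and $Y^2$ is driven by bounded-variation data plus reflection, then $Y^2$ (and hence $\kappa$) is uniquely determined by the classical Skorokhod problem, after which $Y^1=\int\alpha(Y^2)\,dX^1$ is also unique. You recognise this and add ``a genuine feedback from $Y^1$ into the drift of $Y^2$'', but that feedback is the entire content of the construction and you have not specified it; invoking a ``Peano-type degeneracy'' is not a substitute for writing down the two solutions and checking the reflection condition.

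The paper's mechanism is quite different from what you sketch. It takes the half-plane $\{Z\cdot e_1\ge 0\}$ and the \emph{linear} equation
\[
dZ_t = A Z_t\, d\lambda_t - e_1\, d\gamma_t + e_1\, dK_t,\qquad A=\begin{pmatrix}0&1\\1&0\end{pmatrix},
\]
with $\lambda$ continuous (the rough component) and $\gamma$ nondecreasing. The rough noise acts on \emph{both} coordinates through $A$; in particular the reflected coordinate satisfies $dZ^1 = Z^2\,d\lambda - d\gamma + dK$, so it is not of bounded variation and the picture ``reflected coordinate is BV, rough noise acts tangentially'' does not apply. The geometric point is that the orbits of $e^{\lambda A}$ are hyperbolas $\{(Z^2)^2-(Z^1)^2=\mathrm{const}\}$ which meet the boundary $\{Z^1=0\}$ \emph{orthogonally}, while the drift $-e_1\,d\gamma$ pushes $Z$ back onto the boundary. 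One then builds $\lambda$ as a concatenation of small ``tent'' pieces of amplitude $\delta_k$ on intervals shrinking to $0$: during an upswing of $\lambda$, $Z$ moves along a hyperbolic arc from $(0,y_{k+1})$ to $(\sinh\delta_k\,y_{k+1},\cosh\delta_k\,y_{k+1})$; a jump of $\gamma$ then returns $Z$ to $(0,y_k)$ with $y_k=\cosh(\delta_k)\,y_{k+1}$; the preceding downswing of $\lambda$ is absorbed by the reflection and leaves $Z$ unchanged. Choosing $\delta_k$ so that $\sum\delta_k^2=\infty$ (hence $\lambda$ has infinite $2$-variation) but $\sum\delta_k^p<\infty$ for every $p>2$, one gets $y_k\to 0$ and a nontrivial solution $Z^+$ with $Z^+_0=0$, distinct from the trivial solution $Z\equiv 0$; rescaling $y_k\mapsto\eta y_k$ gives uncountably many. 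The only additional constraint, that $\gamma$ have finite total variation, translates into the side condition $\sum\delta_k\exp\bigl(-\tfrac12\sum_{j\le k}\delta_j^2\bigr)<\infty$, satisfied for instance by $\delta_k\sim Ck^{-1/2}$ with $C>1$. None of this appears in your proposal, and it is precisely this explicit construction that carries the theorem.
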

}
We prove this result via a simple counter-example showing that an equation of the form \eqref{eq:RSDE} driven by a rough signal may have infinitely many solutions, even for smooth domains (in our case the domain is just $\R_+ \times \R$). The presented equation is (affine) linear, and since  the rough component of the driving signal is one-dimensional, we may define solutions by a Doss-Sussman representation, so that we actually do not need rough path theory in this paper. 
\smallskip

Our result shows that uniqueness may not hold for signals of finite $p$-variation with $p>2$, while it is known to hold for $p<2$, and it is natural to ask the exact regularity at which uniqueness breaks down. 
In the case of the equation considered in this paper, we obtain a precise answer in the form of
a necessary and sufficient condition on a modulus of continuity for uniqueness to hold for arbitrary signals of the corresponding regularity. Interestingly, Lévy's modulus of continuity for Brownian motion turns out to be a boundary case (namely, the power to which the logarithm appears in the modulus is critical). Since our counterexample consists of carefully chosen deterministic paths, we also discuss what happens when the driving signal in our equation comes from a probabilistic distribution. We focus on the case of fractional Brownian motion with Hurst index $H < \frac{1}{2}$, and we obtain that in that case uniqueness still does not hold (almost surely).
\smallskip

{\color{black}
Finally, let us mention that if the main result of the paper shows that regularity properties of $f$ and $X$ are not sufficient to establish uniqueness of solutions to \eqref{eq:RSDE} in the rough setting, one may still hope that finer properties of the vector fields and/or the driving signal suffice to restore uniqueness. We present several (loose) conjectures in this direction below.
}
\smallskip

The remainder of the paper is organized as follows. In Section 2 we present the counterexample, state our main results  and comment on them. In Section 3 we give the proofs of these results. The proof of some Gaussian estimates is delayed to Section 4.

\section{Main results}

The equation that we consider is written 

\begin{align}\label{eq:RRDE}
dZ_t = A Z_t d \lambda_t -  e_1d\gamma_t + e_1 dK_t  \;\;\;\; \mbox{ for } t\in[0,1],,  \\
Z \cdot e_1 \geq 0, \;\;\; dK = 1_{\{ Z\cdot e_1 = 0\}} |dK|  \notag
\end{align}
where the unknown $(Z,K)$ takes values in $\R^2 \times \R$, $(e_1,e_2)$ denotes the canonical basis,  
$$A = \left(\begin{array}{cc} 0&1\\1&0\end{array}\right),$$
$\lambda$ is a given scalar continuous path, and $\gamma$ is a nondecreasing scalar function.
\smallskip

Note that since $\lambda$ is the only component of the driving signal of unbounded variation in the above equation, we may use a Doss-Sussman representation to define solutions (in fact, since the equation is linear, this is just Duhamel's formula), see subsection \ref{sec:prel} below for precise statements. We will frequently make the abuse of notation of calling $Z$ itself the solution.

We also note that $(Z\equiv 0, K = \gamma)$ is a solution to \eqref{eq:RRDE}, so that to prove that uniqueness does not hold it will be enough to find solutions with $Z_0=0$ and non-null $Z$ component.  

\begin{theorem} \label{thm:ce1}
There exists $\lambda$ $\in$ $ \cap_{p>2} $$C_{p-var}([0,1], \R)$, and $\gamma$ continuous and increasing s.t. \eqref{eq:RRDE}
admits uncountably many distinct solutions on $[0,1]$ with $Z_0=0$, which are all non-null at positive times.
\end{theorem}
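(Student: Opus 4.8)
The plan is to first reduce the rough reflected equation to a pathwise Skorokhod problem via the Doss--Sussman/Duhamel representation announced above, and then to build $\lambda$ and $\gamma$ so that a ``rectification'' mechanism pumps the solution away from the origin along a self-similar family of choices. Since $A^2 = \mathrm{Id}$ one has $e^{sA} = \cosh(s)\,\mathrm{Id} + \sinh(s)\,A$, and the homogeneous flow $dZ = AZ\,d\lambda$ preserves the hyperbolic form $(Z^1)^2 - (Z^2)^2$. Writing $Z^1 = Z\cdot e_1$ and $Z^2 = Z \cdot e_2$, the equation reads $dZ^1 = Z^2\,d\lambda - d\gamma + dK$ and $dZ^2 = Z^1\,d\lambda$, so by variation of constants any solution with $Z_0 = 0$ is given explicitly by $Z^1_t = \int_0^t \cosh(\lambda_t - \lambda_s)\,d(K-\gamma)_s$ and $Z^2_t = \int_0^t \sinh(\lambda_t - \lambda_s)\,d(K-\gamma)_s$. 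These integrals are unambiguous because $K-\gamma$ has bounded variation, so specifying a solution amounts to choosing a nondecreasing $K$ with $dK$ supported on $\{Z^1 = 0\}$ and $Z^1 \geq 0$; the trivial choice $K = \gamma$ gives $Z \equiv 0$, and the whole problem is to produce other admissible $K$.

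The key dynamical mechanism I would isolate is a one-oscillation ``pump''. Starting from a state $(0,v)$ with $v > 0$, if $\lambda$ performs a downward excursion of size $h$ the reflection is forced to act: it holds $Z^1 = 0$ and, since $dZ^2 = Z^1\,d\lambda = 0$, freezes $Z^2 = v$; on the subsequent upward excursion $Z^1$ must leave the boundary and one solves $(Z^1)' = Z^2$, $(Z^2)' = Z^1$ to get $Z^2 \sim v\cosh(h)$. A down--up oscillation is therefore a rectifier multiplying the lateral component by a factor $\geq \cosh(h) > 1$, the gain coming entirely from the asymmetry induced by reflection. I would concatenate such cells on intervals $J_k \downarrow 0$ accumulating at the origin, with amplitudes $h_k$, so that the multiplicative gains compound: running the construction backwards from a positive magnitude at some $t_0 > 0$, the magnitude at the left end of $J_k$ is divided by $\prod_j \cosh(h_j)$, which tends to $0$ as $t \downarrow 0$ precisely when $\sum_k h_k^2 = +\infty$. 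This is what reconciles $Z_0 = 0$ with being non-null for every $t > 0$: there is no first exit time from the origin, the nontrivial solutions merely limit to $0$ as $t \downarrow 0$, which removes the apparent obstruction that the dynamics is ``stuck'' at $(0,0)$.

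To obtain uncountably many solutions rather than one, I would build the cells self-similarly so that the admissible reflection retains a genuine degree of freedom at each scale — concretely, by arranging that at the bottom of each cell the solution may be released into the next upward excursion at one of (at least) two levels, each compatible with $Z^1 \geq 0$ and with the support condition on $dK$. Indexing these binary choices by $\{0,1\}^{\mathbb{N}}$ (equivalently, a Cantor set of release profiles) yields uncountably many solutions, pairwise distinct because their $Z^2$ trajectories differ. The remaining step is regularity bookkeeping: counting $n_k$ oscillations of amplitude $h_k$ at scale $k$, the $p$-variation of $\lambda$ is controlled by $\sum_k n_k h_k^p$, so one selects $h_k, n_k$ with $\sum_k n_k h_k^2 = +\infty$ (the divergence driving the pump) but $\sum_k n_k h_k^p < \infty$ for every $p > 2$. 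This is exactly the borderline at which L\'evy's modulus $\sqrt{2t\log(1/t)}$ sits, and it simultaneously gives $\lambda \in \cap_{p>2} C_{p\text{-var}}([0,1],\R)$ together with the continuity and monotonicity of a matching $\gamma$.

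The main obstacle, I expect, is making the coupled reflected dynamics rigorous rather than heuristic. The cell map $(0,v) \mapsto (0,v')$ is an implicit Skorokhod problem in which the reflection set, the gain factor, and the drift contribution of $d\gamma$ all interact, and one must verify that the branching choices are genuinely admissible for a single fixed $\gamma$ and that the infinite concatenation converges to a bona fide solution on $[0,1]$ with the stated boundary behaviour. Controlling the affine (rather than purely multiplicative) effect of $d\gamma$ across infinitely many compounding cells, while keeping every branch non-null and the branches mutually distinct, is the delicate quantitative point; the $p$-variation estimate should then be comparatively routine.
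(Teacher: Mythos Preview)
Your core picture is right and matches the paper closely: each cell consists of a down-stroke of $\lambda$ (during which reflection pins $Z$ at $(0,v)$), an up-stroke (sending $Z$ along the hyperbola to $(v\sinh h,\,v\cosh h)$), and then a push by $\gamma$ back to the axis; iterating gives $v\mapsto v\cosh h$, and the product $\prod_k\cosh(\delta_k)$ diverging is exactly $\sum_k\delta_k^2=\infty$. The $p$-variation bookkeeping you sketch is also the right one. Two points, however, are genuine gaps.

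\textbf{The uncountability mechanism.} Your proposed binary branching at each cell does not work for a \emph{fixed} pair $(\lambda,\gamma)$. Once $Z$ sits at any point of $\{0\le x<y\}$, the forward evolution through a cell is uniquely determined (this is a one-dimensional Skorokhod problem after the change of variables $\Psi$ the paper introduces), so there is no ``release at one of two levels'' to choose from. The paper obtains its uncountable family differently and very cheaply: by \emph{linear scaling}. If one designs $\gamma$ so that its increment on the $k$-th cell equals $x_k=\sinh(\delta_k)\,y_{k+1}$ for the \emph{maximal} solution $(y_k)$, then for any $\eta\in(0,1]$ the scaled trajectory with $Z^\eta(t_{3k})=(0,\eta y_k)$ is again a solution, because at the $\gamma$-phase one only needs $\eta x_k\le x_k$ to be brought back to the axis (the excess is absorbed by $K$). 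This one-parameter family gives uncountably many distinct solutions, each non-null for $t>0$, and the argument is two lines once the basic cell is understood.

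\textbf{The bounded-variation constraint on $\gamma$.} You do not address why a ``matching $\gamma$'' exists as a continuous increasing function on $[0,1]$. The $\gamma$-increment on the $k$-th cell is $x_k=\sinh(\delta_k)\,y_{k+1}\sim \delta_k\exp\bigl(-\tfrac12\sum_{j\le k}\delta_j^2\bigr)$, and one needs $\sum_k x_k<\infty$. This is a nontrivial extra condition beyond $\sum_k\delta_k^2=\infty$ and $\sum_k\delta_k^p<\infty$ for $p>2$; it is what forces $\lambda$ to have infinite $\psi$-variation for $\psi(r)=r^2/\log(1/r)$ (cf.\ the paper's Lemma~\ref{lem:logvar}). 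Concretely, $\delta_k\sim Ck^{-1/2}$ with $C>1$ meets all three constraints, but this needs to be checked rather than asserted.

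A smaller point: your cell description omits the $\gamma$-phase, leaving $Z$ at $(v\sinh h,\,v\cosh h)$ rather than back on the axis; without that third step the induction does not close. Also, the $n_k$-many oscillations per scale are unnecessary---one oscillation per cell with $\delta_k\to0$ slowly enough already does the job.
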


Let us describe how these solutions are obtained. The trajectories corresponding to the linear part of the equation (driven by $\lambda$) are given by hyperboles asymptotic to the $\{x=y\}$ line, and which cross the $y$ axis (i.e. the reflecting boundary) in the normal direction. On the other hand, in the part of the plane where the equations are constrained to live, the drift $- e_1 d\gamma$ pushes the solution $Z$ towards these hyperboles that are further away from the origin. The solutions from the Theorem above are then obtained by alternating intervals where $\lambda$ acts by moving $Z$ away from the $y$ axis along a small hyperbole arc and then $\gamma$ pushes $Z$ back to the $y$ axis, see Figure \ref{fig:1} below. One then sees that taking $\lambda$ of infinite $2$-variation, one may accumulate infinitely many such small intervals in such a way that  the solution may  escape from $0$ in finite time. The additional restriction that $\gamma$ must have finite total variation imposes a further constraint on how $\lambda$ must be chosen (actually, both constraints combined impose that $\lambda$ has infinite $\psi$-variation, for $\psi(r) = r^2 / \log(r^{-1})$, cf Lemma \ref{lem:logvar}).

\vspace{5mm}

We then focus on the case where $d\gamma=dt$ in \eqref{eq:RRDE}, which for convenience we rewrite below :

\begin{equation}\label{eq:RRDEdt}
dZ_t = A Z_t d \lambda_t - e_1 \; dt  + e_1 dK_t, \;\;\;
Z \cdot e_1 \geq 0, \;\;\; dK = 1_{\{ Z\cdot e_1 = 0\}} |dK|.
\end{equation}
\smallskip

We obtain a sharp criterion on the modulus of continuity of $\lambda$ so that the above admits a unique solution. We say that $\omega :[0,\infty) \to [0,\infty)$ is a \emph{modulus} if it is continuous, non-decreasing, subadditive (i.e. $\omega(a+b) \leq \omega(a) + \omega(b)$ for all $a,b \geq 0$), and satisfies $\omega(0)=0$. Given a modulus $\omega$, we let
$$\C_\omega = \big\{ f:[0,1] \to \R, \;\;\; \forall 0 \leq s < t \leq 1, \left|f(t) - f(s) \right| \leq \omega\left(|t-s|\right) \big\}.$$

\begin{theorem}  \label{thm:cedt}
Given a modulus $\omega$, let 
\begin{equation} \label{eq:deftheta}
\theta_{\omega}(\varepsilon) := \sup_{r \geq 0} \left( \omega(r) - \frac{r}{\varepsilon}\right).
\end{equation}
Then if $\theta_{\omega}$ satisfies Osgood's condition 
\begin{equation} \label{eq:Osg}
 \int_{0^+} \frac{dx}{\theta_{\omega}(x)} = +\infty, 
\end{equation}
the equation \eqref{eq:RRDEdt} admits a unique solution for any $\lambda$ in $\C_\omega$ and any initial condition $Z_0$.

On the other hand, if \eqref{eq:Osg} does not hold and in addition
\begin{equation} \label{eq:om2delta}
\limsup_{\delta \to 0} \frac{\omega(2 \delta)}{2 \omega(\delta)} < 1,
\end{equation}
then there exists $\lambda$ in $\C_\omega$ such that \eqref{eq:RRDEdt} admits multiple solutions with $Z_0=0$.
\end{theorem}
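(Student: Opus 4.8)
The plan is to show that both directions of Theorem~\ref{thm:cedt} are governed by the same scalar ``envelope'' equation $\dot y \asymp \theta_\omega(y)$ for the height $y = Z\cdot e_2$ of a solution that detaches from the reflecting boundary $\{Z\cdot e_1 = 0\}$, and then to read off uniqueness versus non-uniqueness from whether this scalar equation can leave $0$ in finite time, i.e. from Osgood's condition \eqref{eq:Osg}. I would first record the geometry underlying the Doss--Sussmann/Duhamel reduction: since $A^2 = I$, the linear flow is $e^{A(\lambda_t-\lambda_s)} = \cosh(\lambda_t-\lambda_s)\, I + \sinh(\lambda_t-\lambda_s)\, A$, its orbits are the hyperbolae $\{(Z\cdot e_1)^2 - (Z\cdot e_2)^2 = c\}$, which meet $\{Z\cdot e_1 = 0\}$ orthogonally, while the drift $-e_1\,dt$ and the reflection $e_1\,dK$ act only horizontally. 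Writing $x = Z\cdot e_1 \geq 0$ and $y = Z\cdot e_2$, a solution of \eqref{eq:RRDEdt} obeys $dy = x\,d\lambda$ and $dx = y\,d\lambda - dt + dK$; recalling that $(Z\equiv 0, K=t)$ is always a solution, everything reduces to deciding whether a solution can escape from $0$.

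\emph{Uniqueness (under \eqref{eq:Osg}).} For two solutions with the same data I set $\Delta x = x^1-x^2$, $\Delta y = y^1-y^2$, $\Delta K = K^1-K^2$. The drift cancels, and since $JA$ is antisymmetric for $J=\mathrm{diag}(1,-1)$, a direct computation gives $d(\Delta x^2 - \Delta y^2) = 2\,\Delta x\,d\Delta K$; as $dK^i$ is supported on $\{x^i=0\}$ with $x^i\geq 0$, the right-hand side is $\leq 0$. Hence $\Delta x^2 - \Delta y^2$ is non-increasing and, starting from $0$, yields the pointwise bound $|\Delta x_t|\leq|\Delta y_t|$. It then remains to turn $\Delta y_t = \int_0^t \Delta x_s\,d\lambda_s$ into a closed scalar inequality: using the Legendre-type inequality $\omega(r)\leq\theta_\omega(\varepsilon)+r/\varepsilon$ that follows immediately from \eqref{eq:deftheta}, applied on a partition with $\varepsilon$ tuned to the current size of $\Delta y$, I would bound the increments of $\Delta y$ and arrive at $m(t)\leq \int_0^t \theta_\omega(c\,m(s))\,ds$ (up to constants), where $m(t)=\sup_{s\leq t}|\Delta y_s|$. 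Since $m(0)=0$, Osgood's condition \eqref{eq:Osg} forces $m\equiv 0$, giving uniqueness.

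\emph{Non-uniqueness (when \eqref{eq:Osg} fails and \eqref{eq:om2delta} holds).} Here I would construct the escaping solution explicitly by the cyclic mechanism described after Theorem~\ref{thm:ce1}. From a boundary point $(0,y_n)$, let $\lambda$ increase by $\delta_n$ over a short interval (a hyperbola arc, reaching $(y_n\sinh\delta_n,\,y_n\cosh\delta_n)$), then hold $\lambda$ constant so the drift returns $x$ to $0$ in time $y_n\sinh\delta_n$, arriving at $(0,y_{n+1})$ with $y_{n+1}=y_n\cosh\delta_n$. Running this backwards from a fixed height so that the hitting times accumulate at $0$, a solution with $Z_0=0$ that is non-null for $t>0$ exists as soon as one can choose $\delta_n\to 0$ with (i) $\sum_n\delta_n^2 = +\infty$ (so $\prod\cosh\delta_n=\infty$ and the heights tend to $0$ as $t\downarrow 0$), (ii) $\sum_n(\text{arc time}+y_n\sinh\delta_n)<\infty$ (finite total time), and (iii) the resulting piecewise path lying in $\C_\omega$. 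The feasibility of (i)--(ii) is exactly the discrete counterpart of $\int_{0^+}dx/\theta_\omega(x)<\infty$, i.e. of the failure of \eqref{eq:Osg}, and this is where I would convert the quantitative failure of Osgood into an admissible sequence $(\delta_n)$.

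\emph{Main obstacle.} The two genuinely delicate points are, for uniqueness, performing the Young-type estimate of $\int_0^t\Delta x\,d\lambda$ so that the ``time'' contribution $r/\varepsilon$ generated by $\omega(r)\leq\theta_\omega(\varepsilon)+r/\varepsilon$ leaves no uncontrolled residual (which requires choosing $\varepsilon$ proportional to $m$ and absorbing the linear-in-$t$ term), and, for non-uniqueness, verifying $\lambda\in\C_\omega$ for pairs $(s,t)$ spanning infinitely many cycles near $0$: there the total increment is a sum $\sum_j\delta_j$ while the subadditivity of $\omega$ points the wrong way, so it is precisely the gap condition \eqref{eq:om2delta}, $\limsup_{\delta\to0}\omega(2\delta)/2\omega(\delta)<1$, that supplies the geometric decay needed to sum the increments and stay below $\omega(t-s)$. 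I expect this modulus verification, rather than the construction of the trajectory itself, to be the hardest part.
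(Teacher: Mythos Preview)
Your proposal has genuine gaps in both directions, and in each case the paper's key device --- the change of variables $(x,y)\mapsto(\ell,\delta)=(\sqrt{y^2-x^2},\,\tanh^{-1}(x/y))$ --- is precisely what is missing.

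\textbf{Uniqueness.} The Lyapunov step giving $|\Delta x|\le|\Delta y|$ is correct (and can be justified via the Duhamel formula, since $e^{A\lambda}$ preserves the quadratic form $x^2-y^2$). The problem is the next step: you write $\Delta y_t=\int_0^t \Delta x_s\,d\lambda_s$ and propose a Young-type bound. But this integral is \emph{not defined}: $\lambda$ is merely in $\C_\omega$ (in the relevant regime, of infinite $2$-variation) and $\Delta x$ carries no a~priori regularity that would make it a Young integral. Applying the Legendre trick $\omega(r)\le\theta_\omega(\varepsilon)+r/\varepsilon$ on a partition only produces a Riemann sum $\sum_i\Delta x_{s_i}\,\lambda_{s_i,s_{i+1}}$ whose convergence is exactly the issue; nothing allows you to pass to the limit or to absorb the error. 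The paper sidesteps this entirely: in $(\ell,\delta)$ coordinates the noise becomes \emph{additive} in the $\delta$-component and $\ell$ obeys the honest ODE $\dot\ell=\sinh(\delta)$. Bounding $\delta$ through the Skorokhod map by $\sup_{u\in[s,t]}(\omega(t-u)-\tfrac{t-u}{\ell_t})\le\theta_\omega(\ell_t)$ gives $\dot\ell\le C\,\theta_\omega(\ell)$ directly, with no rough integral anywhere, and Osgood concludes.

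\textbf{Non-uniqueness.} Your cycle (arc while $\lambda$ increases by $\delta_n$, then $\lambda$ held constant while the drift returns $x$ to $0$) gives $\lambda$ a net increment $\delta_n$ over each cycle; since the cycles accumulate at $0$, continuity of $\lambda$ at $0$ forces $\sum_n\delta_n<\infty$, which is incompatible with your condition~(i) $\sum_n\delta_n^2=\infty$. You notice the tension (``the total increment is a sum $\sum_j\delta_j$'') but your proposed cure --- invoking \eqref{eq:om2delta} for ``geometric decay'' --- is aimed at the wrong target: \eqref{eq:om2delta} constrains the modulus $\omega$, not the sequence $(\delta_n)$, and cannot make a divergent series converge. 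The paper's construction is different: $\lambda$ tracks $r\mapsto\omega(r)$ up on an interval of length $\eta_k$ and then comes \emph{symmetrically back down}, so $\lambda$ takes the same value at every $t_k$ and continuity at $0$ is automatic (and $\lambda\in\C_\omega$ follows from subadditivity). There is no ``hold $\lambda$ constant'' phase; the drift $-e_1\,dt$ acts throughout. The actual role of \eqref{eq:om2delta} is elsewhere: it yields the quantitative lower bound
\[
\int_0^{\varepsilon\theta_\omega(\varepsilon)}\Big(\omega(r)-\tfrac{2r}{\varepsilon}\Big)_+\,dr \;\gtrsim\; \varepsilon\,\theta_\omega(\varepsilon)^2,
\]
which is what drives the growth of $\ell$ along a discretisation $\varepsilon_k$ with $\varepsilon_k-\varepsilon_{k+1}\asymp\varepsilon_k\theta_\omega(\varepsilon_k)^2$; summability of the cycle lengths $\eta_k=\varepsilon_k\theta_\omega(\varepsilon_k)$ then follows from the \emph{failure} of \eqref{eq:Osg}.
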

\smallskip

\begin{remark} \label{rem:Osgood}
Note that in the case of the H\"older modulus $\omega(r) = r^{\alpha}$ for $\alpha \in (0,1]$, one has that $\theta_{\omega}(\varepsilon) = C_{\alpha} \varepsilon^{\frac{\alpha}{1-\alpha}}$, so that \eqref{eq:Osg} holds if and only if $\alpha \geq \frac{1}{2}$ (recall that we already know from \cite{FS15} that equations are well-posed for $\lambda \in C^\alpha$ with $\alpha > \frac 1 2 $).

 In the case when  $\omega(r) = r^{1/2} \log(r^{-1})^\beta$, one has $\theta_{\omega}(\varepsilon) \sim C_{\beta} \varepsilon \log(\varepsilon^{-1})^{2\beta}$ as $\varepsilon \to 0$, so that \eqref{eq:Osg} holds  if and only if $\beta \leq 1/2$ (this is rather striking, since $\beta =1/2$ is exactly the case of L\'evy modulus  for Brownian motion !).
 
 We also remark that \eqref{eq:om2delta} is a rather mild assumption, for instance it is implied by the fact that \eqref{eq:Osg} does not hold if  $\omega$ is regularly varying at $0^+$\black{(indeed, if $\omega(r) = r^\alpha L(r)$ with $L$ slowly varying at $0$, then \eqref{eq:Osg} holds unless $\alpha \leq 1/2$, whereas $\alpha < 1$ implies \eqref{eq:om2delta})}.

\end{remark}

\black{
\begin{remark} \label{rmk:thm1vs2}
At first glance, Theorem \ref{thm:ce1} may seem like it is a consequence of Theorem \ref{thm:cedt}. However, it is not the case, since in Theorem \ref{thm:ce1} we are able to obtain uncountably many different solutions, whereas in Theorem \ref{thm:cedt}, in the non-uniqueness result we only construct a single nonnull solution to \eqref{eq:RRDEdt} (another one could be obtained by symmetry). We do not know if in that setting we could obtain multiple (three or more) solutions escaping immediately from $0$. (In addition, the solutions constructed in the proof of Theorem \ref{thm:ce1} are very explicit, so that for clarity's sake we believe it is helpful to separate this rather simple counter-example from the more technical computations needed in the proof of Theorem \ref{thm:cedt}  )
\end{remark}
}
\vspace{3mm}
Finally, we consider the probabilistic setting in which $\lambda$ is a fractional Brownian path. The results are as follows :

\begin{theorem}  \label{thm:fbm}
Let $0< H < \frac{1}{2}$ and $\P^H$ be the fBm measure of Hurst index $H$ on $C([0,1],\R)$. 

{\color{black}
(1) Let $\gamma : [0,1] \to \R$ be a piecewise-constant path of the form
\[ \gamma(t) = \sum_{0< t \leq t_k} x_k, \]
where $(t_k)_{k \geq 0}$ decreases to $0$ and $(x_k)_{k \geq 0}$ is a summable sequence satisfying
\[ t_{k+1} - t_{k} \sim_{k \to \infty} k^{-\alpha}, \;\;\; x_k \geq \exp(-k^\theta), \]
where
\[ \max\left(\frac{5}{12H} ,1 \right)< \alpha < \frac{1}{2H}, \;\;\;\; 0< \theta<1- 2 \alpha H. \]
}Then for $\P^H$-almost every $\lambda$, \eqref{eq:RRDE} admits infinitely many solutions. More precisely, there exists a family of processes $(Z^\eta)_{\eta>0}$, adapted w.r.t. the completed natural filtration, that solve \eqref{eq:RRDE} a.s., and such that  a.s. one has for $\eta_1, \eta_2 >0$, 
$$\lim_{t\to 0} \frac{\left|Z^{\eta_1}_t\right|}{\left|Z^{\eta_2}_t \right|} = \frac{\eta_1}{\eta_2}.$$

(2) {\color{black} For the equation with $\gamma(t) = t$}, for $\P^H$-a.e. path $\lambda$, there exists a family $(Z^t)_{t \in [0,1]}$ of functions which are  solutions to \eqref{eq:RRDEdt} and such that $Z^t(s) = 0$ if and only if $s \leq t$ (in particular, these solutions are all distinct).
\end{theorem}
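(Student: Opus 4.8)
The plan is to reduce both statements to pathwise properties of $\lambda$ that can be read off from the constructions behind Theorems \ref{thm:ce1} and \ref{thm:cedt}, and then to verify these properties for $\P^H$-a.e.\ fBm path. Via the Doss--Sussmann representation, a solution with $Z\cdot e_1\ge 0$ is encoded by the scalar amplitude on the reflecting axis: writing $Z_t=\exp(A\lambda_t)W_t$, the free flow moves $Z$ along the hyperbolae $\{\text{$y^2-x^2$ const}\}$ while $\gamma$ (resp.\ $dt$) together with the reflection $K$ implement a one-dimensional Skorokhod push back to the axis. A nonzero solution escaping from $0$ is produced by a sequence of cycles accumulating at $0$: on the $k$-th interval the increment $\delta_k$ of $\lambda$ drives $Z$ off the axis along a hyperbola, amplifying the axis amplitude by a factor $\cosh(\delta_k)$, after which the drift and reflection return it to the axis on a farther hyperbola. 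Running this backwards in time, the amplitude at the $k$-th return is $\asymp \eta\prod_{j\le k}\cosh(\delta_j)^{-1}$, so the solution detaches from $0$ precisely when $\sum_k\delta_k^2=+\infty$, while the bookkeeping of the push yields the compatibility with the (summable, resp.\ $dt$) driver already analysed in Lemma \ref{lem:logvar}.

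For part (1), the jump times $t_k$ and jump sizes $x_k$ of $\gamma$ are fixed in advance, so the total-variation constraint is automatic and the only thing to check is that the $\lambda$-increment on each interval $[t_{k+1},t_k]$ is large enough, and of the correct orientation, to sustain the cycle. Concretely, for a target amplitude $\eta$ one needs, for all large $k$, that the increment exceed a threshold of order $x_k/(\text{current amplitude})$; since $x_k\ge e^{-k^\theta}$ and $|t_k-t_{k+1}|\sim k^{-\alpha}$, the typical fBm increment $\asymp k^{-\alpha H}$ dwarfs this threshold, so the bad event $\mathrm{bad}_k=\{\text{increment too small or mis-oriented}\}$ should have small probability. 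I would then bound $\P(\mathrm{bad}_k)$ and invoke the \emph{convergence} half of Borel--Cantelli (which requires no independence) to conclude that a.s.\ only finitely many cycles fail, hence an escaping solution exists on a neighbourhood of $0$; an arbitrary continuation handles the finitely many exceptional early cycles.

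The main obstacle is precisely the estimation of $\P(\mathrm{bad}_k)$, because the threshold is itself path-dependent (it involves the accumulated amplitude) and the fBm increments over successive intervals are strongly correlated for $H\ne\frac12$. The clean route is to condition on the natural filtration $\F_{t_{k+1}}$: the conditional law of $\lambda_{t_k}-\lambda_{t_{k+1}}$ is Gaussian, and one must (a) lower-bound its conditional variance and (b) control its conditional mean, i.e.\ the linear prediction from the past, so that the conditional probability of a large, correctly-signed increment stays bounded below by a quantity summable against $x_k$. These conditional Gaussian nondegeneracy and prediction estimates for fBm are exactly the content deferred to Section 4, and it is the matching of their decay rates with $x_k$ and $k^{-\alpha}$ that forces the admissible window $\max(5/(12H),1)<\alpha<1/(2H)$, $0<\theta<1-2\alpha H$. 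The family $(Z^\eta)_{\eta>0}$ is then obtained by letting $\eta$ fix the common asymptotic amplitude of the axis process; since the construction is causal, each $Z^\eta$ is adapted to the completed filtration, and because all the $Z^\eta$ share the same cycle geometry and differ only by the multiplicative scale $\eta$, one gets $|Z^{\eta_1}_t|/|Z^{\eta_2}_t|\to\eta_1/\eta_2$ as $t\to0$.

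For part (2) the driver is the smooth $\gamma(t)=t$, so Theorem \ref{thm:cedt} is the relevant tool: the fBm modulus $\omega(r)\asymp r^H\sqrt{\log(1/r)}$ fails Osgood's condition \eqref{eq:Osg} for every $H<\frac12$, with the Brownian case $H=\frac12$ (L\'evy's modulus) sitting exactly at the boundary, so the nonuniqueness mechanism of that theorem can be run. Here, however, one needs more than membership $\lambda\in\C_\omega$: the escape must be realised by the given random path, which requires \emph{lower} oscillation bounds (of Chung/LIL type) guaranteeing that near every point the path oscillates enough to start an escape --- again supplied by the Section 4 estimates. I would first build, for a countable dense set of times $t$ and $\P^H$-a.e.\ $\lambda$, a solution escaping exactly at $t$, so that $Z^t(s)=0\iff s\le t$, discarding only a countable union of null sets; then I would extend to all $t\in[0,1]$ using the self-similarity and stationary-increment-type invariance of the local law of fBm together with a monotonicity/closure argument in $t$. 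The delicate point, as in part (1), remains the control of the correlations, that is, turning the distributional local roughness of fBm into an almost-sure, time-uniform statement.
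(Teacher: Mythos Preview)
Your intuition about the cycle mechanism is correct, but both parts miss the actual technical core.

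For part (1), the paper does \emph{not} proceed by conditioning on $\F_{t_{k+1}}$ and bounding each $\P(\mathrm{bad}_k)$ separately. Instead, it sets $\delta_k=\lambda(t_k)-\min_{[t_{k+1},t_k]}\lambda$ and $S_N=\sum_{k\le N}\delta_k^2$, and proves that $\tilde S_N:=S_N-\E[S_N]$ converges almost surely. This is the key estimate, obtained in Section~4 via a Malliavin--concentration argument (Proposition~\ref{prop:ust} and Corollary~\ref{lem:conc2}) applied to $S_M-S_N$; the bound on $\|D(S_M-S_N)\|_{\cH}^2$ requires estimating correlations $\langle 1_{[t_k^\ast,t_k]},1_{[t_j^\ast,t_j]}\rangle_{\cH}$, and it is precisely this computation that forces the lower bound $\alpha>5/(12H)$. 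Once $\tilde S_N$ is known to converge, the amplitude $y_k^{N,\eta}=\eta\exp(-\tfrac12\E[S_{k-1}])\exp(O(1))$ is controlled deterministically in terms of $\E[S_{k-1}]\sim Ck^{1-2\alpha H}$, and the inequality $x_k\ge\sinh(\delta_{k-1})y_k^{N,\eta}$ then holds for all large $k$ simply because $\theta<1-2\alpha H$. Your conditional-Gaussian route might be an alternative, but you have not shown how the window on $\alpha$ would emerge from it, and you use the endpoint increment rather than $\delta_k$, which is what the reflection actually sees.

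For part (2) there is a genuine gap: Theorem~\ref{thm:cedt} constructs a \emph{specific} $\lambda\in\C_\omega$ for which \eqref{eq:RRDEdt} has multiple solutions; it says nothing about an arbitrary $\lambda$ with that modulus, so the failure of Osgood's condition for the fBm modulus is irrelevant here. The paper instead works directly in the $(\ell,\delta)$ coordinates of Section~\ref{sec:CV} and proves an induction of the form ``on the event $A_k$, $\ell(t_{k+1})\ge(k+1)^{-\eta}\Rightarrow\ell(t_k)\ge k^{-\eta}$'', where $A_k$ is the event that the \emph{Lebesgue measure} of times in $[t_{k+1},t_k]$ at which the reflected path $\lambda(t)-\min_{[t_{k+1},t]}\lambda$ exceeds $k^{-\theta}$ is at least $k^{-\nu}$. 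A small-ball estimate for reflected fBm (Lemmas~\ref{lem:sb}--\ref{lem:sbI}) gives $\P(A_k^c)$ stretched-exponentially small, Borel--Cantelli yields the induction for all large $k$, and the compactness Lemma~\ref{lem:ZEll}(5) produces the limit solution. The uniform-in-$t$ statement is not obtained by extending from a dense set, but by proving a single event $\tilde A_k$ (uniform over all starting times $s\in[0,1]$) and again applying Borel--Cantelli. Your ``Chung/LIL lower bounds'' and ``countable dense set then closure'' are pointing in a related direction, but neither the $L^1$-type small-ball bound nor the explicit induction on $\ell(t_k)$ appears in your outline, and these are the steps that actually make the argument work.
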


 In Figure 2 below we plot (a numerical approximation of) the trajectory of a non-null solution starting from $0$ given by point (2) above, with $H=0.2$.

\begin{remark}
The families of solutions in (1) and (2) are different in two respects :
\begin{itemize}
\item In (1), the solutions are obtained as adapted processes, whereas in (2) they are not.This is what is usually referred to as the distinction between pathwise and path-by-path uniqueness, cf. e.g. \cite{Fla11}. This distinction comes from the method of proof followed in both cases (in the proof of (2) one uses a compactness argument and the chosen subsequence may depend on the path of $\lambda$, whereas in the proof of (1) we do not need to pass to a subsequence). It seems likely that actually the solutions in (2) could also be obtained as adapted processes.

\item In (2), while there are also uncountably many solutions, they only differ by the time at which they leave the "problematic point" (here, the origin), whereas in (1) there are infinitely many solutions leaving $0$ at the same time (this is the same distinction as between the results of Theorem \ref{thm:ce1} and \ref{thm:cedt}, cf Remark \ref{rmk:thm1vs2}).  We also do not know if in (2) we could obtain multiple solutions escaping immediately from $0$.
 \end{itemize}
\end{remark}

{\color{black}
\begin{remark} \label{rmk:conj}
While the main result of this paper implies that well-posedness of rough differential equations with reflection cannot hold in general, one may still hope that uniqueness holds under further restrictions. Let us discuss three additional conditions under which one may conjecture that uniqueness holds. 
\begin{enumerate}
\item {\rm \textbf{Regularity of the signal}.} In equation \eqref{eq:RRDEdt}, uniqueness holds when the driving path has Brownian regularity (as discussed in Remark \ref{rem:Osgood}). One may conjecture that this would still be true for a general equation of the form \eqref{eq:RSDE}, assuming that the rough path associated to $X$ is sufficiently regular, for instance if it has finite $\psi$-variation for suitable $\psi$. Note that the important class of \emph{Markovian} rough paths, as described in \cite[chapter 16]{FV}, have sample paths with similar variation regularity as semimartingales, so that such a result would allow to consider reflected equations driven by such rough paths. This would also imply that rough path-wise methods may be applied to classical reflected SDE, which might prove useful in certain contexts.
\item   {\rm \textbf{Non-degeneracy of the equation}.} In \eqref{eq:RRDE}, if the initial condition $Z_0$ is any point of the boundary different from the origin, then the solution is unique. Since the origin is the only point where the coefficient in front of the noise vanishes, one may hope that some non-degeneracy of the driving vector fields suffices to recover well-posedness (say in the case of fractional Brownian motion, or more generally if the noise is rough enough in some sense). 
\item {\rm \textbf{One-dimensional driving signals}.}  The driving signal in our equation \eqref{eq:RRDE} is $2$-dimensional. One may conjecture that uniqueness holds for any $1$-dimensional driving signal (namely, in the notation from the introduction, that assertion \textbf{(A)}$_{d,1,p}$ holds for arbitrary $d,p$)\footnote{We thank the anonymous referee for noticing that the case $m=1$ in \textbf{(A)}$_{d,m,p}$ remains open.}. In fact, if $X$ is scalar and the vector field $f$ is never tangent to the boundary of the domain $D$, one may locally transform the equation \eqref{eq:RSDE} into an equation with additive noise (with a similar change of variables as described in Section \ref{sec:CV}), which implies in particular uniqueness of solutions. One may expect that this result still holds for general $f$ and we leave this to further research.
\end{enumerate} 
\end{remark}
}
%
\vspace{4cm}
 \begin{figure}[!h]
\def\svgwidth{255bp}
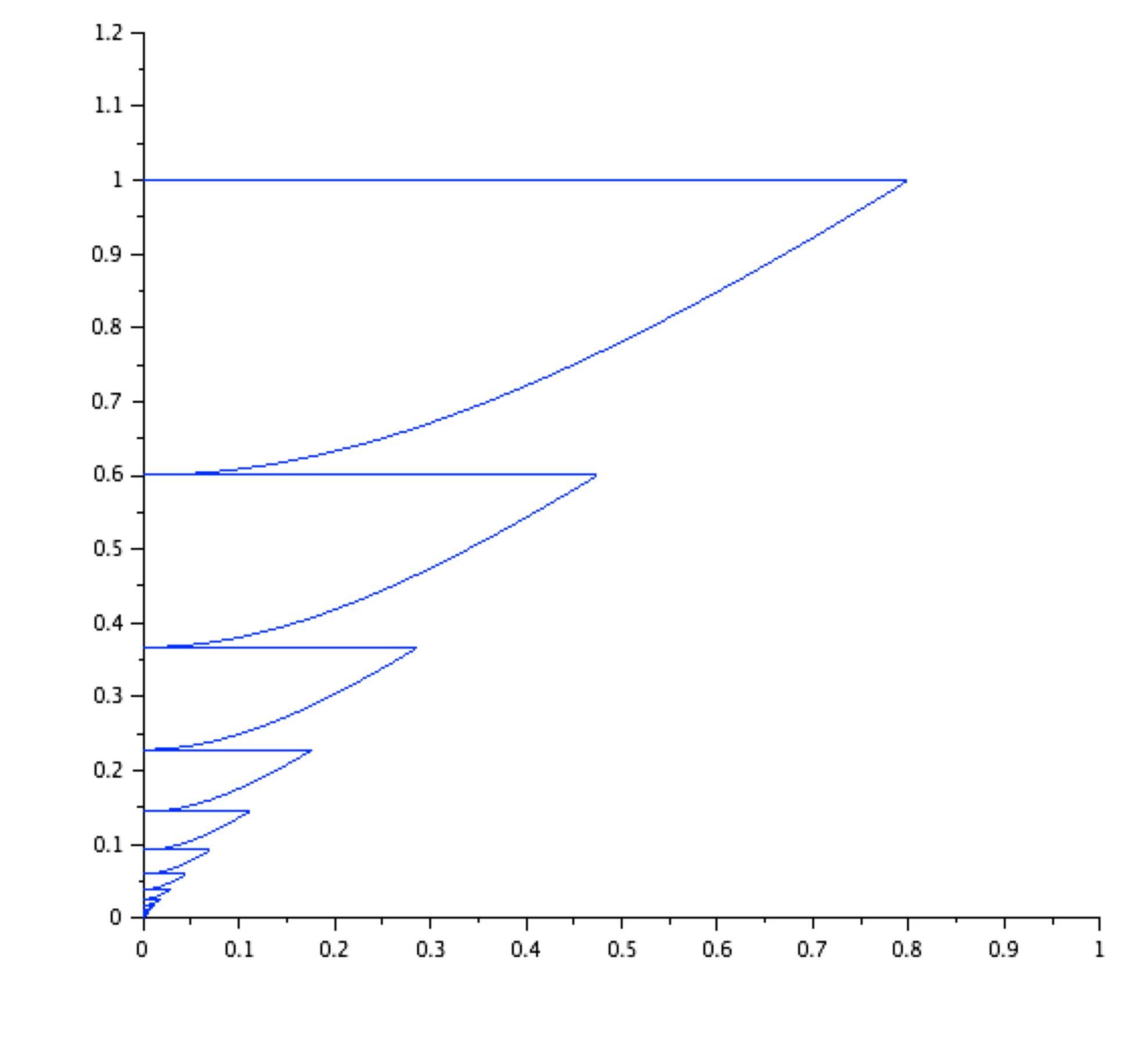
\caption{Trajectory of the solution $Z^+$ obtained in the proof of Theorem \ref{thm:ce1}}
\label{fig:1}
\end{figure}

\begin{figure}[!h]
\includegraphics[width=255bp]{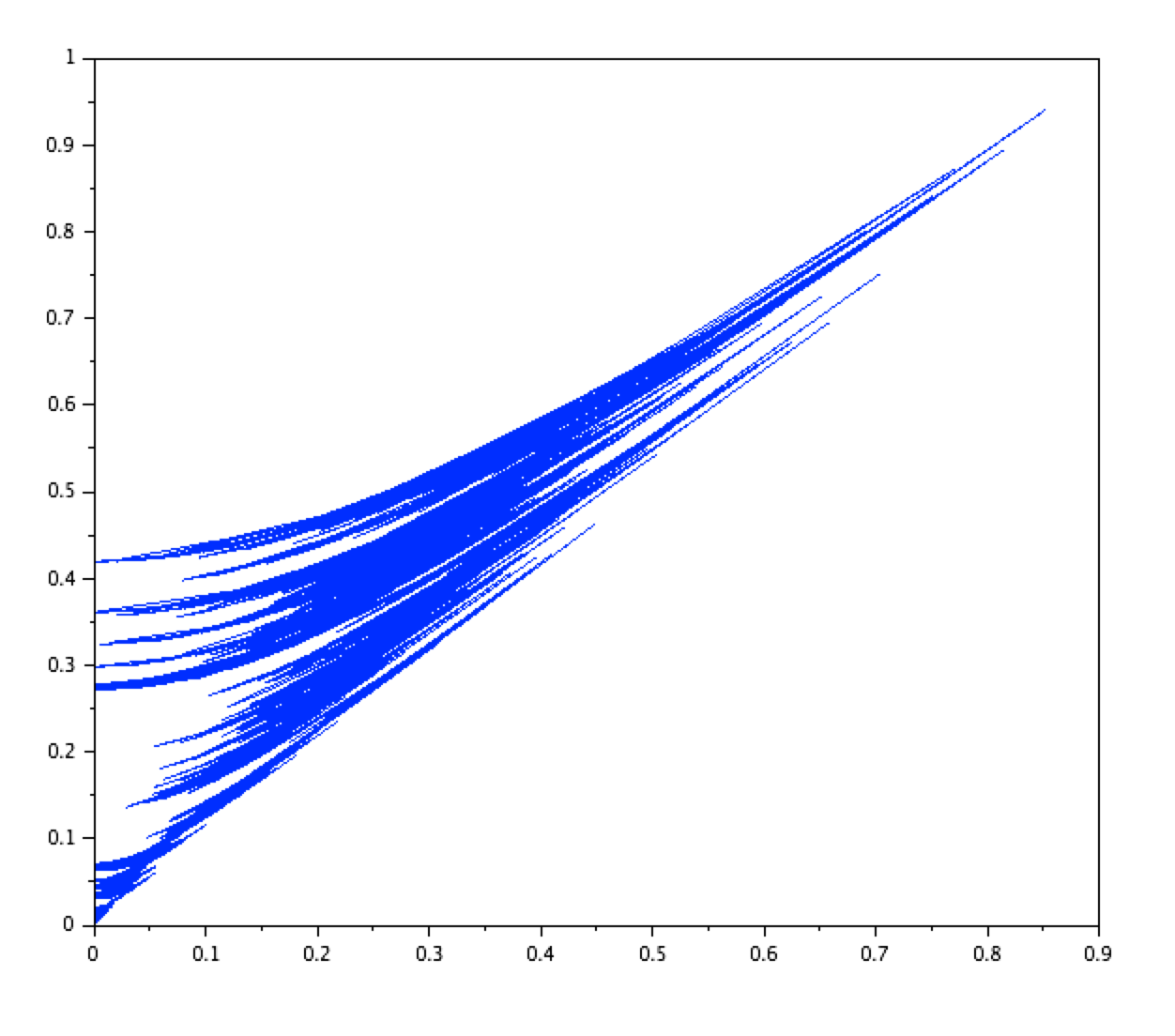}
\caption{Simulation of the trajectory of a solution of \eqref{eq:RRDEdt} where $\lambda$ is a fBm with Hurst index $H=0.2$}
\label{fig:2}
\end{figure}

\newpage

\section{Proofs of the main results}

\subsection{Preliminaries} \label{sec:prel}

\subsubsection{Notation}

Throughout the paper, if $f$ is a function of a real variable, we will denote the value of $f$ at $t$ by either $f_t$ or $f(t)$. We also let $f_{s,t} := f(t) - f(s)$. 

\subsubsection{Definition of solutions}

Let  $\lambda : [0,1]\to \R$ continuous and $\gamma: [0,1] \to \R$ cadlag and nondecreasing. We then define solutions of \eqref{eq:RRDE} on an interval $I \subset [0,1]$ as pairs $(Z,K)$ with  $Z: I \to \R^2$, $K:I \to \R$  such that $K$ is cadlag and nondecreasing, $(Z,K)$ satisfy the second line of \eqref{eq:RRDE}, and for each $s\leq t \in I,$
$$ Z_t = e^{A \lambda_{s,t} }Z_s + \int_{s}^t \left(e^{A \lambda_{u,t}} e_1 \right)(dK_u - d\gamma_u),$$
%
%
%
%

We will also say that $Z$ is a solution if there exists $K$ such that $(Z,K)$ is a solution (in fact, one easily checks that such $K$ is unique but we will not need it). 
\black{We start with a preliminary lemma.}
{\color{black}
\begin{lemma}\label{lem:cont}
Let $Z:[0,1] \to \R^2$ be a solution to \eqref{eq:RRDE} on $(0,1]$ and assume that $Z$ is continuous at $0$. Then $Z$ is a solution to \eqref{eq:RRDE} on $[0,1]$.
\end{lemma}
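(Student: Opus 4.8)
The plan is to observe that, since $Z$ is already a solution on $(0,1]$, the integral identity from the definition of solutions holds for every $0 < s \leq t \leq 1$, and the reflection constraints hold on $(0,1]$. Thus the only thing to establish is the $s=0$ instance of the identity, namely
\[ Z_t = e^{A \lambda_{0,t}} Z_0 + \int_0^t \left( e^{A \lambda_{u,t}} e_1 \right)\left(dK_u - d\gamma_u\right), \]
together with the fact that the constraints still hold at $t=0$. The natural strategy is to let $s \to 0^+$ in the $s>0$ identity. The boundary term $e^{A \lambda_{s,t}} Z_s$ converges to $e^{A \lambda_{0,t}} Z_0$ by continuity of $Z$ at $0$ and of $\lambda$, and the $d\gamma$-part of the integral poses no difficulty since $\gamma$ is nondecreasing. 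Hence everything reduces to controlling the $dK$-part, i.e. to showing that $dK$ defines a \emph{finite} measure on $(0,t]$. As $K$ is nondecreasing, this is equivalent to the existence of a finite limit $K_{0^+} := \lim_{s \to 0^+} K_s$, which, again by monotonicity, is equivalent to $K$ being bounded below near $0$.

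The key step, which I expect to be the main obstacle, is precisely this lower bound (ruling out $K_s \to -\infty$ as $s\to 0^+$). Here I would exploit the sign structure of the equation. Since $A^2 = I$, one has $e^{A\alpha} = \cosh(\alpha) I + \sinh(\alpha) A$, so that $\left(e^{A\alpha} e_1\right)\cdot e_1 = \cosh(\alpha) \geq 1$. Taking the inner product of the $s>0$ identity with $e_1$ gives
\[ Z_t \cdot e_1 = \left(e^{A \lambda_{s,t}} Z_s\right)\cdot e_1 + \int_s^t \cosh(\lambda_{u,t})\left(dK_u - d\gamma_u\right). \]
Because $dK_u \geq 0$ and $\cosh \geq 1$, I can bound $K_t - K_s = \int_s^t dK_u \leq \int_s^t \cosh(\lambda_{u,t})\, dK_u$, and rearranging the display yields
\[ K_t - K_s \leq \left(Z_t - e^{A\lambda_{s,t}} Z_s\right)\cdot e_1 + \int_s^t \cosh(\lambda_{u,t})\, d\gamma_u. \]
Letting $s \to 0^+$, the right-hand side converges to a finite limit (the bracketed term by continuity of $Z$ at $0$ and of $\lambda$; the integral because $\cosh(\lambda_{\cdot,t})$ is bounded on $[0,1]$ and $\gamma$ has finite total variation). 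Hence $\sup_{s} (K_t - K_s) < \infty$, so $K$ is bounded below near $0$ and $K_{0^+}$ is finite. I then set $K_0 := K_{0^+}$, so that $K$ is cadlag and nondecreasing on $[0,1]$ with no atom at $0$.

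Finally I would conclude by passing to the limit $s\to 0^+$ in the full (vector-valued) identity. With $K$ and $\gamma$ now defining finite measures on $(0,t]$ and the integrand $u \mapsto e^{A\lambda_{u,t}} e_1$ continuous and bounded, dominated convergence gives $\int_s^t \to \int_{(0,t]}$, while $e^{A\lambda_{s,t}} Z_s \to e^{A\lambda_{0,t}} Z_0$; this establishes the $s=0$ identity. It remains to check that the reflection constraints extend to $[0,1]$: the inequality $Z_0 \cdot e_1 \geq 0$ follows from $Z_s \cdot e_1 \geq 0$ on $(0,1]$ by continuity at $0$, and since $K$ carries no mass at $\{0\}$, the measure $dK$ on $[0,1]$ coincides with $dK$ on $(0,1]$, so the support condition $dK = 1_{\{Z\cdot e_1 = 0\}}|dK|$ is preserved. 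Therefore $(Z,K)$ is a solution on $[0,1]$, as claimed.
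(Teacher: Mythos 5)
Your proof is correct and follows essentially the same route as the paper: both hinge on the identity $K_t - K_s \leq \int_s^t \cosh(\lambda_{u,t})\,dK_u = \bigl(Z_t - e^{A\lambda_{s,t}}Z_s\bigr)\cdot e_1 + \int_s^t \cosh(\lambda_{u,t})\,d\gamma_u$, obtained from the $e_1$-component of the equation together with $\cosh \geq 1$ and $dK \geq 0$, to show that $K$ extends (with a finite value and no atom) to $t=0$. The only cosmetic difference is that the paper deduces $K(t)-K(s)\to 0$ as $s,t\to 0$ (a Cauchy criterion), while you combine boundedness with monotonicity of $K$ and then spell out the passage to the limit in the solution identity, which the paper leaves implicit.
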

\begin{proof}
By definition, there exists $K:(0,1] \to \R$ such that $(Z,K)$ is a solution on $(0,1]$, and it suffices to show that $K$ may be extended continuously to $[0,1]$.

Note that for $\delta \in \R$ one has
\begin{equation}
\label{eq:etA}
\exp(\delta A) = \left(\begin{array}{cc} \cosh(\delta)&\sinh(\delta)\\\sinh(\delta)&\cosh(\delta)\end{array}\right)
\end{equation}
and we can then write, for any $s \leq t$ in $(0,1]$,
\begin{align*}
0 \leq K(t) - K(s) \leq \int_s^t \cosh(\lambda_{u,t}) dK_u =   \left(Z_t - e^{A \lambda_{s,t} }Z_s\right)\cdot e_1 + \int_s^t \cosh(\lambda_{u,t}) d\gamma_u.
\end{align*}
Since $Z$ and $\gamma$ are continuous at $0$ this implies that $K(t) - K(s) \to 0$ as $s,t \to 0$ and the result follows.
\end{proof}
}

\subsubsection{A change of variables} \label{sec:CV}

We now introduce a change of variables which will be crucial in the proofs of Theorem \ref{thm:cedt} and Theorem \ref{thm:fbm} (2).

Throughout this section we assume that $\lambda :[0,1] \to \R$ is a fixed continuous path, $\gamma : [0,1] \to \R$ is  nondecreasing. To simplify notation we will also assume in this subsection (w.l.o.g.) that $\gamma$ is continuous as well. Note that this implies that any solution $(Z,K)$ to \eqref{eq:RRDE} must be continuous (indeed, the definition of solution implies that any jumps of $Z$ and $K$ are related by $\Delta Z(t)= \Delta K(t) e_1$, so that any jump of $K$ occurs at a time $t$ with $Z(t) \cdot e_1 >0$, which in turn implies $\Delta K(t) = 0$).

Define 
$$\cR_+ = \left\{ (x,y) , \;\;\; 0 \leq x < y\right\},\;\;\;\;\; \cR_- = \left\{ (x,y) , \;\;\; 0 \leq x < - y\right\},$$
and let 
\begin{align*}
\Psi :  \cR_+ &\to (0,\infty) \times [0,\infty) \\
(x,y) &\mapsto \left(\sqrt{y^2-x^2}, \tanh^{-1}(x/y)\right)
\end{align*}
Then $\Psi$ is a bijection, with inverse given by $\Psi^{-1}(\ell, \delta) = (\ell \sinh(\delta), \ell \cosh(\delta))$. In these new coordinates, \eqref{eq:RRDE} takes the form

\begin{equation}  \label{eq:elldelta}
\left\{ \begin{array}{lll} d \ell  &= &  \sinh(\delta)   d \gamma, \\ d\delta &=& d \lambda - \frac{\cosh(\delta)}{\ell} d \gamma + dk, \\
\delta \geq 0,&& dk \geq 0, \;\;\;\; \delta dk = 0. \end{array}\right.
\end{equation}

Given functions $U =(\ell,\delta) : I \subset [0,1] \to (0,\infty) \times [0,\infty)$, $k : I \to \R$ we say that $(U,k)$ is a solution to \eqref{eq:elldelta} if the third line of \eqref{eq:elldelta} holds and in addition,
\begin{equation}
\label{eq:defEqld}
\forall s \leq t \in I, \;\;\; \ell(t) = \ell(s) + \int_s^t  \sinh(\delta_u)   d \gamma_u, \;\;\;\; \delta(t) = \delta(s) +  \lambda_{s,t} - \int_{s}^t \frac{\cosh(\delta_u)}{\ell_u} d \gamma_u + \int_s^t dk_u.
\end{equation}
We will again say that $U$ is a solution if $(U,k)$ is a solution for some $k$, actually one sees easily that  $U=(\ell,\delta)$ is a solution if and only if
\begin{equation}
\label{eq:defEqldnok}
\forall s \leq t \in I, \;\;\; \ell(t) = \ell(s) + \int_s^t  \sinh(\delta_u)   d \gamma_u, \;\;\;\; \delta(t) = \Gamma_{s} \left( \delta(s) +  \lambda_{s,\cdot} - \int_{s}^{\cdot} \frac{\cosh(\delta_u)}{\ell_u} d \gamma_u \right)(t),
\end{equation}
where $\Gamma_{s}$ is the one-dimensional Skorokhod map defined by
\begin{equation} \label{eq:defGamma1}
\Gamma_{s}(f) : t \mapsto  f(t) - \min\left( \inf_{[s,t]} f, 0\right). 
\end{equation}

\begin{lemma} \label{lem:EllDelta}
Given $U_0 = (\ell_0, \delta_0) \in (0,\infty) \times [0,\infty)$, \eqref{eq:elldelta} admits a unique solution $U=(\ell,\delta)$ on $[0,1]$ starting from $U_0$. In addition,
\begin{equation} \label{eq:EstDelta}
\forall s \leq t \in [0,1], \;\;\;\; \delta_s + \lambda_{s,t} - \gamma_{s,t} \frac{\cosh\left(\delta_s + \sup_{u \in [s,t]} |\lambda_{u,t}|\right)}{\ell_s } \leq \delta_t \leq \delta_s + \sup_{u \in [s,t]} |\lambda_{u,t}|.
\end{equation}
\begin{equation} \label{eq:EstEll}
\forall s \leq t \in [0,1], \;\;\;\; \ell_s \leq  \ell_t \leq \ell_s +  \gamma_{s,t} \sinh\left( \delta_s +  \sup_{u \in [s,t]} |\lambda_{u,t}| \right).
\end{equation}
\end{lemma}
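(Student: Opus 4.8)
The plan is to prove existence and uniqueness via the Skorokhod-map fixed-point formulation \eqref{eq:defEqldnok}, together with a Gr\"onwall-type estimate justified by the bounds \eqref{eq:EstDelta}--\eqref{eq:EstEll}, which I would establish first. The key structural observation is that the map $\Gamma_s$ in \eqref{eq:defGamma1} is $1$-Lipschitz in the supremum norm, and that on any interval $[s,t]$ the dynamics for $\delta$ are driven by $\lambda_{s,\cdot}$ plus a correction term involving $\cosh(\delta_u)/\ell_u$. Since $\ell$ is nondecreasing (immediate from $d\ell = \sinh(\delta)\,d\gamma \geq 0$ as $\delta \geq 0$), the factor $1/\ell_u \leq 1/\ell_0$ stays bounded, so the only source of nonlinearity is the $\cosh(\delta_u)$ term. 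The a priori bound \eqref{eq:EstDelta} on $\delta_t$ must therefore be proved first to control this term and to confine the solution to a region where $\cosh$ is Lipschitz.

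First I would derive the a priori estimates. For the upper bound in \eqref{eq:EstDelta}, I use that $\Gamma_s$ applied to $g(\cdot) = \delta_s + \lambda_{s,\cdot} - \int_s^\cdot \frac{\cosh(\delta_u)}{\ell_u}\,d\gamma_u$ satisfies $\delta_t = g(t) - \min(\inf_{[s,t]} g, 0)$; since the integral term is nonincreasing (the integrand is nonnegative and $\gamma$ nondecreasing), one checks $\delta_t \leq \delta_s + \sup_{u\in[s,t]}|\lambda_{u,t}|$ by estimating the reflection term against the running infimum of $\lambda_{s,\cdot}$. For the lower bound, since $dk \geq 0$ the reflection only pushes $\delta$ up, so $\delta_t \geq \delta_s + \lambda_{s,t} - \int_s^t \frac{\cosh(\delta_u)}{\ell_u}\,d\gamma_u$, and plugging in the already-established upper bound on $\delta_u$ (and monotonicity of $\ell$) gives the stated lower bound. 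The estimate \eqref{eq:EstEll} for $\ell$ then follows directly by integrating $d\ell = \sinh(\delta)\,d\gamma$ and inserting the upper bound on $\delta$.

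For existence and uniqueness I would run a Picard iteration on a small interval $[0,h]$, working in the space of continuous $U=(\ell,\delta)$ with values in $(0,\infty)\times[0,\infty)$, equipped with the supremum norm. The map sending a candidate $(\ell,\delta)$ to the right-hand side of \eqref{eq:defEqldnok} is a contraction for $h$ small: the $\ell$-component difference is controlled by $\Lip(\sinh)$ on the relevant bounded range times $\gamma_{0,h}$, and the $\delta$-component difference, using the $1$-Lipschitz property of $\Gamma_s$, is bounded by the difference of the integral terms, which involves $\Lip(\cosh)$ and $\Lip(1/\ell)$ (both finite since $\delta$ stays in a bounded interval by \eqref{eq:EstDelta} and $\ell \geq \ell_0 > 0$), again multiplied by $\gamma_{0,h}$. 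Choosing $h$ so that $\gamma_{0,h}$ is small enough yields a strict contraction, hence a unique local solution; the a priori bounds \eqref{eq:EstDelta}--\eqref{eq:EstEll} guarantee the solution does not leave the region $\ell > 0$, so the local solution extends to all of $[0,1]$ by iterating on a finite partition adapted to $\gamma$. The main obstacle will be handling the division by $\ell$ cleanly: one must ensure the iterates keep $\ell$ bounded away from zero so that $\cosh(\delta)/\ell$ remains Lipschitz, and I expect the cleanest route is to use the monotonicity $\ell_t \geq \ell_0$ throughout, combined with the uniform upper bound on $\delta$ from \eqref{eq:EstDelta}, so that all constants in the contraction estimate depend only on $\ell_0$, $\delta_0$, and $\sup|\lambda|$ and not on the iterate.
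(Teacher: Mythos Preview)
Your derivation of the a priori estimates is essentially the paper's: it too proves \eqref{eq:EstDelta}--\eqref{eq:EstEll} first (the upper bound on $\delta$ via the last time $\delta$ vanishes, which is equivalent to your Skorokhod-map computation; the lower bound and the $\ell$-bounds then follow exactly as you say). Where you diverge is in the existence/uniqueness argument. The paper does \emph{not} run a Picard iteration: for existence it takes smooth approximations $\lambda^n\to\lambda$, invokes classical well-posedness for each $\lambda^n$, and passes to the limit via Arzel\`a--Ascoli using the a priori bounds for equicontinuity; for uniqueness it runs a separate Gr\"onwall argument on $|U-U'|^2$, using that the reflection terms contribute with the right sign. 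Your contraction-mapping route is a legitimate alternative and arguably more self-contained (you avoid the black box of existence for smooth drivers), at the cost of having to organise the local-to-global patching carefully. One small point to tighten: the bound \eqref{eq:EstDelta} is stated for \emph{solutions}, so when you invoke it to control the iterates you should note that the same Skorokhod-map computation yields $\delta^{(n+1)}_t \leq \delta_0 + \sup_{u\in[0,t]}|\lambda_{u,t}|$ for every iterate (the integral term is nonincreasing regardless of which $(\ell^{(n)},\delta^{(n)})$ appears in it), and similarly $\ell^{(n+1)}\geq \ell_0$; otherwise the appeal to \eqref{eq:EstDelta} is circular.
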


\begin{proof}
1. We first prove that if a solution exists it must satisfy \eqref{eq:EstDelta}-\eqref{eq:EstEll}. Monotonicity of $\ell$ is obvious from non-negativity of $\delta$. The upper bound in \eqref{eq:EstDelta} is obtained by considering $u =  \sup \left\{ r \in [s,t], \delta(r) = 0\right\}$. The lower bound follows from the upper bound and monotonicity of $\ell$. Finally, the upper bound in \eqref{eq:EstEll} follows from the upper bound in \eqref{eq:EstDelta}.

2. We prove existence. In the case when $\lambda$ is smooth existence is classical. We then take approximations $\lambda^n \to \lambda$ in supremum norm, and let $(\delta^n, \ell^n)$ be the corresponding solutions. By Step 1., one has a uniform upper bound on $\delta^n$,  $\ell^n$ and $(\ell^n, \delta^n)$ are equicontinuous. By Arzela-Ascoli, we may find a subsequence which converges uniformly to some $(\ell,\delta)$ and it is clear that the definition of solution is stable under passage to the limit in supremum norm.

3. We then prove uniqueness. This is straightforward due to additivity of the noise (cf e.g. \cite{CMR17} for a similar proof). If $(U,k)$ and $(U',k')$ are solutions then noting that $V(\ell,\delta) := (\sinh(\delta), - \cosh(\delta)/\ell)$ is locally Lipschitz on $(0,\infty) \times [0,\infty)$, one has that
\begin{align*}
d  \left| U - U'\right|^2& =  2 (U-U') \cdot (V(U) - V(U')) d \gamma_t +  2 (U -U') \cdot (dk - dk') e_1 \\
&\leq C \left| U - U'\right|^2 d \gamma_t 
\end{align*}
since $ (U -U') \cdot (dk - dk') e_1 = - \ell dk' - \ell' dk \leq 0$, and we conclude by Gronwall's lemma.
\end{proof}

\begin{lemma} \label{lem:ZEll}
(1) $Z$ is a solution to \eqref{eq:RRDE} with values in $\cR_+$ if and only if $\Psi(Z)$ is a solution to \eqref{eq:elldelta}, and then $dK =\frac{dk}{\ell}$.

(2) For any $Z_0$ in $\cR_+$, there exists a unique solution to \eqref{eq:RRDEdt} on $[0,1]$ starting from $Z_0$. In addition, $Z_t \in \cR_+$ for each $t \in [0,1]$. The same holds if $\cR_+$ is replaced by $\cR_-$.

(3) Let $Z$ be a solution to \eqref{eq:RRDEdt} starting from $0$, and let $t_{\ast} = \inf\{ t , Z_t \neq 0\}$. Then either $Z_s \in \cR_+$ for each $s>t_{\ast}$, or $Z_s \in \cR_-$ for each $s>t_{\ast}$.

(4) If $Z$ is a solution to \eqref{eq:RRDEdt} starting from $0$, then $(\ell,\delta):= \Psi^{-1}(Z)$ is a solution to \eqref{eq:elldelta} on $(t_\ast,1]$, and one has $\lim_{s \downarrow t_\ast} (\ell_s, \delta_s)= (0,0)$, and $\delta_{s_n} = 0$ for some sequence $s_n \to t_\ast$.

(5) Let $\mathcal{Z} = \{0\} \cup \Psi^{-1}(K)$ where $K$ is a bounded subset of $(0,\infty) \times \R_+$. Then
$$\left\{ Z \mbox{ solution to \eqref{eq:RRDE} on }[0,1] , \;\;\; Z(0) \in \mathcal{Z} \right\}$$
is precompact in $C([0,1])$.
\end{lemma}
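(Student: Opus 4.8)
The plan is to verify the hypotheses of the Arzel\`a--Ascoli theorem. Since $\gamma$ is continuous, every solution lies in $C([0,1],\R^2)$ (as noted at the start of this subsection), so it suffices to prove that the family $\{Z : Z \text{ solves \eqref{eq:RRDE} on } [0,1],\ Z_0\in\mathcal{Z}\}$ is uniformly bounded and equicontinuous, with constants depending only on $K$, on $\gamma_{0,1}$, and on the modulus of continuity of $\lambda$. Throughout I abbreviate $\Lambda_{s,t}:=\sup_{u\in[s,t]}|\lambda_{u,t}|$.

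The first ingredient is a confinement property. Writing $Z=(x,y)$, the definition of a solution gives $dx=y\,d\lambda+dK-d\gamma$ and $dy=x\,d\lambda$, so that $d(y^2-x^2)=2x\,d\gamma-2x\,dK=2x\,d\gamma\ge0$ (the reflection is active only on $\{x=0\}$, so $x\,dK=0$); rigorously, on any branch $\cR_+$ this is just the monotonicity of $\ell=\sqrt{y^2-x^2}$ from \eqref{eq:EstEll}. Hence a solution issued from $\Psi^{-1}(K)\subset\cR_+$ keeps $y^2-x^2\ge\ell_0^2>0$ and, being continuous, stays in $\cR_+$; and a solution issued from $0$ keeps $y^2-x^2\ge0$, so after its exit time $t_\ast=\inf\{t:Z_t\neq0\}$ it lies entirely in a single branch $\cR_+$ or $\cR_-$, with $\lim_{s\downarrow t_\ast}\Psi(Z_s)=(0,0)$. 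This last fact, together with the branch-confinement for solutions leaving $0$, is the analogue for \eqref{eq:RRDE} of Lemma~\ref{lem:ZEll}(3)--(4) and is proved in the same way; it is the one genuinely delicate point, since it is precisely where the degeneracy of the driving vector field at the origin must be handled, and all the estimates below reduce to it.

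On a time interval where $Z\in\cR_+$ I pass to $(\ell,\delta)=\Psi(Z)$, which solves \eqref{eq:elldelta} by Lemma~\ref{lem:ZEll}(1), and apply the estimates \eqref{eq:EstDelta}--\eqref{eq:EstEll} from the entry time of the branch --- with data $(\ell_0,\delta_0)\in K$ in the first case and with effective data $(0,0)$ at $t_\ast$ in the second. The upper bound in \eqref{eq:EstDelta}, followed by \eqref{eq:EstEll}, yields uniform bounds $\delta\le C_\delta$ and $\ell\le C_\ell$ depending only on $\sup_K\delta_0$, $\sup_K\ell_0$, $\gamma_{0,1}$ and $\sup_{[0,1]}|\lambda|$ (the effective data $(0,0)$ only improving the constants); since $|Z|^2=\ell^2\cosh(2\delta)$, this gives the uniform bound. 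For equicontinuity I split
$$Z_t-Z_s=(\ell_t-\ell_s)(\sinh\delta_t,\cosh\delta_t)+\ell_s\big(\sinh\delta_t-\sinh\delta_s,\ \cosh\delta_t-\cosh\delta_s\big).$$
The first term is controlled by $|\ell_t-\ell_s|\le\gamma_{s,t}\sinh(C_\delta)$ from \eqref{eq:EstEll}. The key point for the second term is to bound $\ell_s|\delta_t-\delta_s|$ \emph{without} controlling $\delta_t-\delta_s$ itself, which may be large near the origin: if $\delta_t\ge\delta_s$ the upper bound in \eqref{eq:EstDelta} gives $\ell_s(\delta_t-\delta_s)\le C_\ell\,\Lambda_{s,t}$, while if $\delta_t<\delta_s$ the lower bound in \eqref{eq:EstDelta} together with $\ell_s\le\ell_u$ for $u\ge s$ gives $\ell_s(\delta_s-\delta_t)\le C_\ell|\lambda_{s,t}|+C\gamma_{s,t}$. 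Combining, one obtains a modulus estimate $|Z_t-Z_s|\le C(\Lambda_{s,t}+\gamma_{s,t})$ with $C$ depending only on $C_\ell,C_\delta$.

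It remains to assemble this for a solution issued from $0$: on $[0,t_\ast]$ one has $Z\equiv0$, and for $s\le t_\ast<t$ the branch estimate on $(t_\ast,1]$ together with $\lim_{r\downarrow t_\ast}Z_r=0$ gives $|Z_t-Z_s|=|Z_t|\le C(\Lambda_{t_\ast,t}+\gamma_{t_\ast,t})\le C(\Lambda_{s,t}+\gamma_{s,t})$. Thus the estimate $|Z_t-Z_s|\le C(\Lambda_{s,t}+\gamma_{s,t})$ holds for \emph{all} solutions with $Z_0\in\mathcal{Z}$ and all $s\le t$, with a single constant $C$; the case of a branch $\cR_-$ is identical by the symmetry $y\mapsto-y$. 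Since $\lambda$ and $\gamma$ are uniformly continuous on $[0,1]$, the right-hand side tends to $0$ as $t-s\to0$ uniformly, which is the required equicontinuity, and Arzel\`a--Ascoli concludes.
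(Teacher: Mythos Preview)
Your proof of part (5) is correct and follows exactly the approach the paper indicates in one line---namely, that (5) is a consequence of (1), (4), and the estimates \eqref{eq:EstDelta}--\eqref{eq:EstEll}---by spelling out the Arzel\`a--Ascoli verification in detail. Your key observation, that one must bound $\ell_s|\delta_t-\delta_s|$ rather than $|\delta_t-\delta_s|$ itself (the latter may be large when $\ell$ is small), is precisely the right way to handle the degeneracy at the origin and is what makes the equicontinuity estimate go through uniformly over initial data in $\mathcal{Z}$.
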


\begin{proof}
(1) : 
Fix $(Z,K)$ a solution to \eqref{eq:RRDE} with values in $\cR_+$, and let $\lambda^n \to \lambda$ be smooth approximations. We let 
$$Z^n(t) = e^{A \lambda^n_{0,t}} Z(0) - \int_0^t e^{A \lambda_{u,t}^n} e_1 \left( d\gamma_u - dK_u \right).$$
Then a simple calculus exercise shows that $U^n  = (\ell^n, \delta^n) := \Psi(Z^n)$ solves
\begin{equation*}
d\ell^n = \sinh(\delta^n) (d \gamma - dK), \;\;\; d \delta^n = d \lambda^n + \frac{\cosh(\delta^n)}{\ell^n} \left(d \gamma - dK\right)
\end{equation*}
Note that $Z^n \to Z$ in supremum norm, so that when $n \to \infty$,  $\sinh(\delta^n) dK \to 0$, $\frac{\cosh(\delta^n)}{\ell^n} dK \to \frac{dK}{\ell}$. This implies that $\Psi(Z) = \lim_{n} \Psi(Z^n)$ solves \eqref{eq:elldelta}. The converse implication can be proven similarly.

{\color{black}
Given $Z_0 \in \cR_+$, Lemma \ref{lem:EllDelta} and (1) imply that there exists a unique solution $Z$ to \eqref{eq:RRDE} such that $Z_t \in \cR_+$, $t\in[0,1]$. By continuity of solutions this is actually the only solution to \eqref{eq:RRDE} (if $\hat{Z}$ is any other solution, then $\hat{Z}$ coincides with $Z$ up to $\hat{t}=\inf\{ t : \hat{Z}(t) \notin\cR_+\}$, but then if $\hat{t}$ is finite, $\hat{Z}(\hat{t})=Z(\hat{t}) \in \cR_+$ which is not possible since $\cR_+$ is relatively open in $\R_+ \times \R$).  The case when $Z_0 \in \cR_-$ is similar by symmetry. This proves (2).}

(2) clearly implies that if $Z$ is a solution and $Z(t) \in \cR_+$ for some $t$, then $Z(s) \in \cR_+$ for all $s \geq t$ (and idem for $\cR_-$). A similar analysis shows that in the region $\{ x > |y|\}$, $x^2 - y^2$ must be non-increasing in $t$, so that this region is not attainable from $0$. This proves (3).

Ad (4), since $Z_{t_\ast} = 0$ it is clear that $\ell(s)$ converges to $0$ as $s \downarrow t_{\ast}$. It is also clear that one must have a sequence $s_n \to t_\ast$ with $\delta(s_n)=0$ (otherwise, $dK \equiv 0$ on a neighborhood of $t_{\ast}$ which implies that $Z \equiv 0$ as well). By (1) and \eqref{eq:EstDelta}, this in turn implies that $\delta$ converges to $0$ at $t_{\ast}$.

(5) is a consequence of points (1), (4) and \eqref{eq:EstDelta}-\eqref{eq:EstEll}.
\end{proof}

\begin{remark} \label{rmk:noGamma}
In the case when $\gamma \equiv 0$, the equation simplifies and we have that for an initial condition $Z \in \cR_{+}$, the unique solution is simply given by $Z_t = \Psi^{-1}(\ell_t, \delta_t)$ with
$$\ell_t = \ell_0, \;\;\; \delta_t = \Gamma_{0}\left( \delta_0 + \lambda_{0,\cdot} \right)(t).$$
\end{remark}

\subsection{Proof of Theorem \ref{thm:ce1} and Theorem \ref{thm:fbm} (1)}

\begin{proof}[Proof of Theorem \ref{thm:ce1}]
Let $(\delta_k)_{k \geq 0}$ be a sequence of non-negative numbers converging to $0$ and such that
\begin{equation} \label{eq:condInf2var}
\sum_{k=0}^\infty \delta_k^2 = +\infty,
\end{equation}
\begin{equation} \label{eq:condGamBV}
\sum_{k=0}^\infty \delta_k \exp\left( - \frac{1}{2}\sum_{0\leq j \leq k} \delta_j^2\right) < +\infty,
\end{equation}
and which further satisfies 
\begin{equation} \label{eq:pvar}
\forall p > 2, \;\;\;\sum_{k=0}^\infty \delta_k^p < +\infty,
\end{equation}
(for instance taking $\delta_k \sim C k^{-1/2}$ with $C>1$ will do).

Fix $(t_{k})_{k\geq 0}$ a decreasing sequence with $t_0=1$ and $t_k \to 0$ as $k \to \infty$. We then define $\lambda :[0,1] \to \R $ continuous, affine on each $(t_{k+1},t_{k})$, $k\geq 0$, with increments
\begin{equation}
\lambda(t_{3k+2}) - \lambda(t_{3k+3}) = - \delta_k, \;\;\; \lambda(t_{3k+1}) - \lambda(t_{3k+2}) =  \delta_k, \;\;\;\;  \lambda(t_{3k}) - \lambda(t_{3k+1}) = 0, \;\;\;\;k \geq 0,
\end{equation}
and note that $\lambda$ has infinite $2$-variation but finite $p$-variation for each $p>2$.

We then define recursively $y_k$, $k \geq 0$ by 
\begin{equation}
y_0 =1, \;\;\; y_{k+1} = y_k / \cosh(\delta_k)
\end{equation}
and note that 
\black{ for $k \geq 1$
\begin{equation} \label{eq:logy}
\ln(y_k) = - \sum_{i=0}^{k-1} \ln(\cosh(\delta_i)) \leq C - \frac{1}{2}  \sum_{i=0}^{k}  \delta_i^2
\end{equation}
for some finite constant $C$, where we have used that $\ln(\cosh(\delta)) = \frac{\delta^2}{2} + O(\delta^4)$ as $\delta \to 0$ as well as \eqref{eq:pvar}. Using  \eqref{eq:condInf2var}, this implies that $y_k \to 0$ as $k \to \infty$.}
We further define
$$x_k = \sinh(\delta_k) y_{k+1}$$ 
and let $\gamma$ be again continuous affine on each $(t_k,t_{k+1})$, with
\begin{equation}
\gamma(t_{3k+2}) - \gamma(t_{3k+3}) = \gamma(t_{3k+1}) - \gamma(t_{3k+2}) = 0 , \;\;\;\gamma(t_{3k}) - \gamma(t_{3k+1}) =  x_k , \;\;\;\;k \geq 1.
\end{equation}
\black{Note that for $k$ large enough, $x_k \leq 2 \delta_k y_{k+1}$ so that, by \eqref{eq:condGamBV} and  \eqref{eq:logy}, $\gamma$ has finite variation on [0,1].}

We now exhibit a non-null solution to \eqref{eq:RRDE}. Let $Z^{(k)}$ be the solution to \eqref{eq:RRDE} but starting at time $t_{3k}$ from the point $(0,y_k)$. Then one can show inductively that for $j<k$, it holds that
$$Z^{(k)}(t_{3j+2}) = Z^{(k)}(t_{3j+3}) = (0,y_{j+1}), \;\; Z^{(k)}(t_{3j+1}) = (x_j,y_j), \;\;\; Z^{(k)}(t_{3j}) = (0,y_j).  $$
{\color{black}
Indeed, on intervals of the form $[t_{3j+3}, t_{3j+2}]$, the term $AZ d\lambda$ pushes in the outer normal direction, namely  for $t$ in this interval one has
\[ Z(t) = (0,y_{j+1}), \;\; K(t) - K(t_{3j+3}) = - y_{j+1} \lambda_{t_{3j+3}, t}, \]
whereas for $t$ in $[t_{3j+2}, t_{3j+1}]$, recalling the expression \eqref{eq:etA}, one has
\[ Z(t) = (\sinh(\lambda_{3j+2,t}) y_{j+1}, \cosh(\lambda_{3j+2,t}) y_{j+1}), \;\;\;\; K(t) = K(t_{3j+2}), \]
and finally on intervals of the form $[t_{3j+1}, t_{3j}]$ the drift $\gamma$ simply pushes back $Z$ to the $y$-axis, namely
\[Z(t) = (\sinh(\delta_j) y_{j+1} - \gamma_{t_{3j+1}, t}, \cosh( \delta_j) y_{j+1}), \;\;\;\; K(t) = K(t_{3j+1}). \]
}


Then let $Z^{+}(t) = \lim_{k} Z^{(k)}(t)$ if $t>0$ (actually the sequence is constant for $k$ large enough) and $Z^{(+)}(0)=0$. Then $Z^{(+)}$ is a solution to \eqref{eq:RRDE}. Indeed, it is a solution on each $(\epsilon,1]$ for $\epsilon >0$ and by Lemma \ref{lem:cont} this implies that it is a solution on $[0,1]$. 
See Figure 1 for a picture of the trajectory of $Z^{+}$.

Finally, for each $\eta \in [0,1]$, taking $y'_k = \eta y_k, x'_k=\eta x_k$, one may construct in exactly the same way a solution $Z^\eta$ to \eqref{eq:RRDE} which satisfies $Z^\eta(t_{3k})=(0,y'_k)$ for all $k \geq 0$. Indeed, $(y'_k)$ satisfies the same induction relation as $(y_k)$, so that on each interval $[t_{3k+3},t_{3k}]$, the first two steps are exactly the same, and the third step also brings $(x'_k,y'_k)$ to $(0,y'_k)$ since $x'_k \leq x_k$. It follows that \eqref{eq:RRDE} has uncountably many different solutions.
\end{proof}

\black{
\begin{remark}
Note that, in the dynamics described above, $Z$ is constant on intervals of the form $[t_{3j+3},t_{3j+2}]$ (where $Z$ is on the $y$ axis while $\lambda$ decreases), and one may then wonder if these intervals could not simply be dropped from the definition of solutions. However this is not possible, since in order to keep $\lambda$ bounded near $0$ (as it should be, since it must be a continuous function), it is needed to have intervals where $\lambda$ decreases in order to compensate for its infinite variation on the intervals where it increases. The effect of the reflection is to allow for such intervals without any impact on the dynamics of the solution.\end{remark}
}

We note that the counterexample can actually be applied to more general paths $\lambda$ (allowing the drift $\gamma$ to contain jumps), the proof of the below proposition is exactly as above.

\begin{proposition} \label{prop:1}
Let $\lambda \in C([0,1],\R)$ be such that, for some sequence of times $1=t_0 \geq t_1 \geq \ldots t_k \downarrow 0$, letting 
$$\delta_k = \lambda(t_k) - \min_{[t_{k+1},t_{k}]} \lambda,$$
one has that \eqref{eq:condInf2var}-\eqref{eq:condGamBV} hold. Then, for any summable sequence $(x_k)_{k \geq 0}$ with 
$$\forall k \geq 0, \;\;\;\; x_k \geq \delta_k \Pi_{j=0}^k \cosh(\delta_j)^{-1},$$ letting $\gamma$ be the piecewise constant path defined by
\begin{equation} \label{eq:DefGammaJump}
\gamma(t) =  \sum_{0< t_k \leq t} x_k,
\end{equation}
the equation \eqref{eq:RRDE} has infinitely many solutions. 
\end{proposition}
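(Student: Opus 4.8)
The plan is to follow the construction in the proof of Theorem~\ref{thm:ce1} verbatim, replacing the explicit piecewise-affine trajectories by the intrinsic description of the free reflected flow from Remark~\ref{rmk:noGamma}. First I would set $y_0=1$ and $y_{k+1}=y_k/\cosh(\delta_k)$, so that $y_{k+1}=\prod_{j=0}^{k}\cosh(\delta_j)^{-1}$, and check that $y_k\to 0$. Since $\lambda$ is continuous and $t_k\downarrow 0$, the oscillation of $\lambda$ on $[t_{k+1},t_k]$ tends to $0$, so $\delta_k\to 0$ and $M:=\sup_k\delta_k<\infty$; using $\ln\cosh(\delta)\geq c_M\,\delta^2$ on $[0,M]$ together with \eqref{eq:condInf2var} gives $\ln(y_k)=-\sum_{j<k}\ln\cosh(\delta_j)\to-\infty$. (Note that, in contrast with Theorem~\ref{thm:ce1}, this does not use \eqref{eq:pvar}.) The role of \eqref{eq:condGamBV} is only to guarantee that summable sequences $(x_k)$ satisfying the stated lower bound actually exist.

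Next I would realise the escaping solution as a limit of solutions anchored on the $y$-axis. For each $k$, let $Z^{(k)}$ be the unique solution to \eqref{eq:RRDE} on $[t_k,1]$ started from $(0,y_k)$ and valued in $\cR_+$, which exists by Lemma~\ref{lem:ZEll}(2). I claim $Z^{(k)}(t_j)=(0,y_j)$ for all $j\leq k$, proved by downward induction on $j$ (the base case $j=k$ being the initial condition). For the step on $[t_{j+1},t_j]$, observe that $\gamma$ is constant on the open interval, so Remark~\ref{rmk:noGamma} applies there: starting from $(0,y_{j+1})$ at time $t_{j+1}$, the radial coordinate stays frozen at $\ell\equiv y_{j+1}$ while the angular coordinate evolves as $\delta(t)=\Gamma_{t_{j+1}}(\lambda_{t_{j+1},\cdot})(t)$. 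From the definition \eqref{eq:defGamma1} of the Skorokhod map and the definition of $\delta_j$ one then reads off the key identity $\delta(t_j^-)=\lambda(t_j)-\min_{[t_{j+1},t_j]}\lambda=\delta_j$, whence $Z(t_j^-)=(y_{j+1}\sinh(\delta_j),\,y_{j+1}\cosh(\delta_j))$ by \eqref{eq:etA}. At the jump time $t_j$, since $\lambda$ is continuous both the drift $-e_1\,d\gamma$ and the reflection $e_1\,dK$ act only on the first coordinate, so $Z$ moves horizontally; the lower bound imposed on $x_j$ ensures the jump is large enough for the reflection to return $Z$ onto the boundary, landing at $(0,y_{j+1}\cosh(\delta_j))=(0,y_j)$. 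This closes the induction.

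By uniqueness in Lemma~\ref{lem:ZEll}(2), $Z^{(k)}$ and $Z^{(k+1)}$ agree on $[t_k,1]$, so $Z^{+}(t):=\lim_k Z^{(k)}(t)$ is well defined and eventually constant for $t>0$, and solves \eqref{eq:RRDE} on every $(\varepsilon,1]$. As $Z^{+}(t_k)=(0,y_k)\to 0$, the path $Z^{+}$ is continuous at $0$ with $Z^{+}(0)=0$, and Lemma~\ref{lem:cont} upgrades it to a solution on $[0,1]$ that is non-null at positive times. To produce infinitely (indeed uncountably) many solutions I would rerun the argument from the scaled anchors $(0,\eta y_k)$, $\eta\in(0,1]$: the recursion for $(\eta y_k)$ is identical, and the same fixed $\gamma$ still returns the scaled path to the boundary because the horizontal displacement to be cancelled, $\eta\,y_{j+1}\sinh(\delta_j)$, only decreases with $\eta$. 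Summability of $(x_k)$ (finite total variation of $\gamma$) is part of the hypothesis, so nothing more is required there.

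The main obstacle, compared with Theorem~\ref{thm:ce1}, lies in the two new features: the jumps of $\gamma$ and the arbitrary continuous $\lambda$. For the jumps one must argue that at $t_j$ only the first coordinate moves and that the reflection increment $\Delta K(t_j)$ is exactly what keeps $Z$ in $\cR_+$ while placing it on the boundary, which is where the quantitative lower bound on $x_j$ enters. For a general (possibly wildly oscillating) $\lambda$ the crux is the Skorokhod identity $\delta(t_j^-)=\delta_j$: one must verify that the reflection ``forgets'' the downward excursions of $\lambda$ on $[t_{j+1},t_j]$ and records only the net rise $\delta_j$ above the running minimum, which is precisely why $\delta_k$ is defined through $\min_{[t_{k+1},t_k]}\lambda$.
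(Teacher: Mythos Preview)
Your argument is essentially the paper's own (the paper simply says ``the proof is exactly as above'', referring to Theorem~\ref{thm:ce1}), and you have correctly identified the two new ingredients---using Remark~\ref{rmk:noGamma} to handle an arbitrary continuous $\lambda$ via the Skorokhod identity $\delta(t_j^-)=\delta_j$, and treating the jump of $\gamma$ at $t_j$ as a horizontal shift followed by reflection.

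There is one small wrinkle. For the unscaled solution ($\eta=1$) you need the jump $x_j$ to dominate the horizontal coordinate $y_{j+1}\sinh(\delta_j)$, whereas the stated hypothesis only gives $x_j\geq \delta_j\,y_{j+1}$; since $\sinh(\delta_j)>\delta_j$ this is not quite enough, so your sentence ``the lower bound imposed on $x_j$ ensures the jump is large enough'' is not justified as written for $\eta=1$. The fix is immediate and already implicit in your last paragraph: since the $\delta_j$ are bounded by some $M<\infty$, one has $\inf_j \delta_j/\sinh(\delta_j)\geq M/\sinh(M)=:\eta_0>0$, and for every $\eta\in(0,\eta_0]$ the required inequality $x_j\geq \eta\,\sinh(\delta_j)\,y_{j+1}$ does follow from the hypothesis. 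This still yields uncountably many solutions $Z^\eta$, which is all the proposition claims.
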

%
%

One may wonder what is the minimal regularity of paths $\lambda$ to which the above proposition applies. Clearly  they must have infinite $2$-variation, but the lemma below shows that they must have at least infinite $\psi$-variation, with $\psi(r) = r^2/\log(1/r)$, which in particular rules out classical Brownian paths.

\begin{lemma} \label{lem:logvar}
Let $(\delta_k)_{k \geq 0}$ be a sequence of elements of $(0,1)$, such that
$$\sum_{k \geq 0} \frac{\delta_k^2} {\log (\delta_k^{-1})} < \infty \mbox{ and } \sum_{k \geq 0} \delta_k \exp\left( - \frac{1}{2} \sum_{0 \leq j\leq k} \delta_j^2 \right) < \infty.$$

Then
$$\sum_{k \geq 0} \delta_k^2 < \infty.$$ 
\end{lemma}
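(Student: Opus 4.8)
The plan is to argue by contradiction. Write $S_k := \sum_{j=0}^k \delta_j^2$, so that the desired conclusion $\sum_k \delta_k^2 < \infty$ is equivalent to $S_k$ staying bounded, and the second hypothesis reads $\sum_k \delta_k e^{-S_k/2} < \infty$. I would assume instead that $S_k \to \infty$ and derive that $\sum_k \delta_k^2 / S_k < \infty$, which contradicts an elementary divergence fact and thereby closes the argument.

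The crux is the identity $\delta_k e^{-S_k/2} = \delta_k^2 \exp\!\big(\log(\delta_k^{-1}) - \tfrac12 S_k\big)$, which lets me play the two hypotheses off against each other according to the size of $\log(\delta_k^{-1})$ relative to $S_k$. I split the indices into $L := \{k : \log(\delta_k^{-1}) > \tfrac12 S_k\}$ (the extremely small $\delta_k$) and its complement $G$. For $k \in L$ the exponent above is positive, so $\delta_k^2 \le \delta_k e^{-S_k/2}$; summing and invoking the second hypothesis gives $\sum_{k \in L} \delta_k^2 < \infty$, and since $S_k \ge S_0 = \delta_0^2 > 0$ this yields $\sum_{k \in L} \delta_k^2/S_k < \infty$. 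For $k \in G$ one has $\log(\delta_k^{-1}) \le \tfrac12 S_k$, hence $\delta_k^2/S_k \le \tfrac12\,\delta_k^2/\log(\delta_k^{-1})$, so the first hypothesis gives $\sum_{k \in G} \delta_k^2/S_k < \infty$. Adding the two contributions yields $\sum_k \delta_k^2/S_k < \infty$.

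It then remains to contradict this with the elementary fact that whenever $a_k \ge 0$ and $S_k = \sum_{j \le k} a_j \to \infty$, one has $\sum_k a_k/S_k = \infty$ (applied with $a_k = \delta_k^2$). I would prove this by a doubling argument: setting $n_0 = 0$ and $n_{i+1} = \min\{k : S_k \ge 2 S_{n_i}\}$ (which exist since $S_k \to \infty$), each block contributes $\sum_{k=n_i+1}^{n_{i+1}} a_k/S_k \ge (S_{n_{i+1}} - S_{n_i})/S_{n_{i+1}} \ge \tfrac12$, and there are infinitely many blocks. This contradiction forces $S_k$ to be bounded, i.e. $\sum_k \delta_k^2 < \infty$.

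The only genuinely clever step, and the main obstacle to spot, is the dichotomy based on comparing $\log(\delta_k^{-1})$ with $\tfrac12 S_k$: once one observes that the exponentially weighted hypothesis controls precisely the regime of minuscule $\delta_k$ (where the weight $e^{-S_k/2}$ is dominated by the factor $\delta_k^{-1}$), while the logarithmically damped hypothesis controls the complementary regime, everything else is bookkeeping together with the standard divergence of $\sum a_k/S_k$.
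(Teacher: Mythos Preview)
Your argument is correct, and it follows a genuinely different route from the paper's.

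The paper first re-labels the sequence so that $(\delta_k)$ is non-increasing, picks $k_0$ with $\sum_{k\geq k_0}\delta_k^2/\log(\delta_k^{-1})\leq \tfrac12$, and then bounds $\sum_{k\geq k_0}\delta_k^2$ directly by writing $\delta_k^2=\delta_k\exp(-\log(\delta_k^{-1}))$ and using monotonicity to show $\log(\delta_k^{-1})\geq \tfrac12\sum_{k_0\leq j<k}\delta_j^2$; this feeds straight into the second hypothesis. Your proof instead keeps the original ordering, argues by contradiction, and splits the index set according to whether $\log(\delta_k^{-1})$ exceeds $\tfrac12 S_k$, handling each piece with one of the two hypotheses to force $\sum_k \delta_k^2/S_k<\infty$, which then collides with the Abel--Dini type fact that $\sum a_k/S_k=\infty$ whenever $S_k\to\infty$. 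What you gain is that you never need to rearrange: the paper's ``re-labelling if necessary'' step is innocuous for the first hypothesis and the conclusion, but the second hypothesis involves the running partial sums $S_k$ and is not manifestly rearrangement-invariant, so your approach is arguably cleaner on that point. What the paper's approach buys is a one-line direct estimate with no auxiliary divergence lemma or contradiction. Both proofs pivot on the same identity $\delta_k^2=\delta_k\exp(-\log(\delta_k^{-1}))$; the difference is whether one enforces $\log(\delta_k^{-1})\gtrsim S_k$ globally via monotonicity (paper) or only on a subset via the dichotomy (you).
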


\begin{proof}
Re-labelling if necessary we may assume that $(\delta_k)$ is non-increasing in $k$. Let $k_0$ be such that $\sum_{k \geq k_0} \frac{\delta_k^2} {\log (\delta_k^{-1})} \leq \frac{1}{2}$. We then write
\begin{align*}
\sum_{k \geq k_0} \delta_k^{2} &= \sum_{ k \geq k_0} \delta_k \exp\left(-  \log(\delta_k^{-1})\right) \\
&\leq \sum_{ k \geq k_0} \delta_k \exp\left(- \sum_{k_0 \leq j < k } \frac{ \delta_j^2  \log(\delta_k^{-1})}{\log(\delta_j^{-1})}\right) \\
&\leq \sum_{ k \geq k_0} \delta_k \exp\left(-\frac{1}{2} \sum_{k_0 \leq j < k } \delta_j^2 \right)  < \infty.
\end{align*}
\end{proof}

We will then prove Theorem \ref{thm:fbm} (1). We will not apply directly Proposition \ref{prop:1} above, since we want our solutions to be adapted processes, whereas the construction above is anticipative (note that $Z(t_k)$ depends on $(\delta_j)_{0\leq j \leq k}$).

\begin{proof}[Proof of Theorem \ref{thm:fbm} (1)]
Fix $0< H < \frac{1}{2}$. Let $t_0=1$ and $t_k$ decreasing to $0$ be such that $t_{k} - t_{k+1} \sim k^{-\alpha}$ for some $\alpha$ satisfying
\begin{equation*}
\max\left(\frac{5}{12H} ,1 \right)< \alpha < \frac{1}{2H},
\end{equation*}
and let $(x_k)_{k\geq 0}$ be a summable sequence with
\begin{equation} \label{eq:defxn}
x_k \geq \exp\left(-k^{\theta}\right)
\end{equation}
for some $0<\theta<1- 2 \alpha H$. We then let $\gamma$ be defined by \eqref{eq:DefGammaJump}.

We further define
$$\delta_k = \lambda(t_k) - \min_{[t_{k+1},t_k]} \lambda, \;\; k\geq 0$$
$$S_N  = \sum_{k=0}^N \delta_k^2, \;\;\;\;\tilde{S}_N = S_N - \E[S_N]$$
and by Gaussian computations (deferred to subsection \ref{sec:ProofSN}), it holds that 
\begin{equation} \label{eq:asSN}
\P^H-\mbox{a.s.}, \lim_{N\to \infty} \tilde{S}_N  = \tilde{S}, \mbox{ for some finite r.v. }\tilde{S}.
\end{equation}

We then fix $\eta >0$ and let $Z^{N,\eta}$ be the solution to \eqref{eq:RRDE} starting at time $t_{N}$ from $(0, y^{N, \eta}_{N})$ with 
$$y^{N,\eta}_{N} = \eta \exp\left(- \frac{1}{2}\E[S_{N-1}]\right)$$

Define for $0 \leq k \leq N$
$$y^{N, \eta}_k = y^{N,\eta}_{N} \Pi_{j=k}^{N-1} \cosh(\delta_j).$$
Then it holds that for $0 < k \leq N$,
\begin{equation} \label{eq:ZNinduction}
Z^{N,\eta}(t_{k}) = (0, y^{N, \eta}_k), \;\;\; x_k \geq \sinh(\delta_{k-1})y^{N, \eta}_k  \;\;\;\;\;\;\Rightarrow \;\;\;\;\;\;  Z^{N,\eta}(t_{k-1}) = (0, y^{N, \eta}_{k-1}) .
\end{equation}
{\color{black}
Indeed, by Remark \ref{rmk:noGamma}, if the first equality holds, then one has in that case that for $t_k \leq t < t_{k-1}$
\[ Z^{N,\eta}(t) = (\sinh(\delta(t)) y^{N, \eta}_k, \cosh(\delta(t)) y^{N, \eta}_k) \]
with $\delta(t) = \Gamma_{t_k}\left( \lambda_{t_k,\cdot} \right)(t)$, and in particular $\delta(t_{k-1}-) = \delta_{k-1}$. Then if the second inequality above holds, the jump of $\gamma$ at time $t_{k-1}$ brings $Z$ back to the axis, so that  $Z^{N,\eta}(t_{k-1}) = (0, \cosh(\delta_{k-1})y^{N, \eta}_{k})$.

Now note that
\begin{equation} \label{eq:esty}
y^{N, \eta}_k = \eta \exp\left(-\frac{1}{2} \E \left[S_{k-1}\right]\right) \exp\left( - \frac{1}{2} \left ( \tilde{S}_{N-1} - \tilde{S}_{k-1} \right) + R_{k,N}\right),
\end{equation}
where
\[ R_{k,N} = O\left( \sum_{j=k}^\infty \delta_j^4 \right).\]
By the scaling properties of fBm, it holds that $\E[\delta_j^4] = O (j^{-4H \alpha})$, so that a.s., $R_{k,N} = o_{k\to +\infty}(1)$.

In addition, for some constant $C>0$
$$\E \left[S_{k-1}\right] \sim_{k \to \infty} C k^{1-2\alpha H},$$
so that by \eqref{eq:defxn},  \eqref{eq:asSN}, and the fact that $\delta_k \to 0$ a.s., we can use \eqref{eq:ZNinduction} to obtain that there exists $\P^H$-a.s. $k_0$ s.t. for each $N \geq k \geq k_0$,
$$Z^{N,\eta}(t_k) = (0, y^{N, \eta}_k).$$
}
In addition for $k_0 \leq k \leq N \leq M$, it holds that
$$\frac{y^{N, \eta}_k}{y^{M, \eta}_k}  = \exp\left(\frac 1 2 \left( \tilde{S}_{N} - \tilde{S}_{M}\right) + o_{N,M \to \infty}(1) \right)\to_{N,M \to \infty} 1$$
 and it follows that $Z^{N,\eta}(t_k)$ converges to some limit $Z^\eta(t_k)$ for each $k \geq k_0$, which is non-null by \eqref{eq:esty}. Also note that by Remark \ref{rmk:noGamma}, there exists a continuous map $\psi :\R^2 \times \R \times \R \to \R^2$ such that $Z$ is a solution to \eqref{eq:RRDE} on $[t_j,1]$ if and only if
 $$\forall t_j \leq t_{k+1} \leq t < t_k \leq 1, Z(t) = \psi\left( Z(t_{k+1}), \lambda(t), \min_{[t_{k+1},t]} \lambda\right),$$
 $$\forall t_{k} > t_j, Z(t_k) = \Pi \left( Z(t_k-) - x_k e_1\right),$$
 where $ \Pi(x,y) = (\max(x,0),y)$. In particular, it follows that $Z^{N,\eta}$ converges a.s. for each $t \in (0,1]$ as $N \to \infty$ to a solution $Z^\eta$ of \eqref{eq:RRDE} on $(0,1]$. In addition, from \eqref{eq:esty} it is also clear that $Z^\eta(t_k) \to 0$ as $k \to \infty$, so that, by Lemma \ref{lem:cont}, $Z^\eta$ may be extended to a solution on $[0,1]$ with $Z_0=0$.

\end{proof}

\subsection{ Proof of Theorem \ref{thm:cedt} and Theorem \ref{thm:fbm} (2)}~\\

Let us first give a sketch of the argument, which is based on the change of variables described in subsection \ref{sec:CV}, where we obtained the equivalent equation \eqref{eq:elldelta}. To simplify, replace in this equation $\sinh(\delta)$ by $\delta$ and $\cosh(\delta)$ by $1$. One may then rewrite it (assuming $\ell_0=\delta_0=0$)
as an equation involving only $\ell$, namely
\begin{equation} \label{eq:ell}
\ell_t = \int_0^t \Gamma_0 \left(\lambda_{0,\cdot} - \int_0^\cdot \frac{du}{\ell_u}
\right)(s) ds,
\end{equation}
(recall that $\Gamma_{0}$ is the one-dimensional Skorokhod map defined in \eqref{eq:defGamma1}). Therefore the existence of a non-zero solution to \eqref{eq:RRDEdt} essentially coincides with the existence of a non-zero solution to \eqref{eq:ell}. Note that if $\lambda$ admits $\omega$ as a modulus then $\Gamma_0 \left(\lambda_{0,\cdot} - \int_0^\cdot \frac{du}{\ell_u}\right)(t) \leq \theta_\omega(\ell(t))$, so that such a solution satisfies $\frac{d}{dt} \ell(t) \leq \theta_{\omega}(\ell(t))$. Under Osgood's condition on $\theta_\omega$, this implies that $\ell$ must actually be identically zero.  When Osgood's condition does not hold, there exist solutions to $\frac{d}{dt}\ell=\theta_\omega(\ell)$ which escape from zero, and by a suitable discretization we are also able to exhibit $\lambda \in \C_\omega$ such that the solution to \eqref{eq:ell} has a similar behaviour. Finally, in the case of a fractional Brownian motion, we use a similar discretization argument combined with small ball properties for the reflected fBm, to obtain Theorem \ref{thm:fbm} (2).

Let us now proceed with the rigorous proof. 

\begin{proof}[Proof of Theorem \ref{thm:cedt}]

Let us first treat the case where Osgood's condition holds. Asume by contradiction that we have a solution $Z$ to \eqref{eq:RRDEdt} with $Z_0=0$ and $Z \neq 0$ on $(0,1]$. Let $(\ell,\delta)$ be the corresponding solution to \eqref{eq:elldelta} obtained by Lemma \ref{lem:ZEll}. Note that by Lemma \ref{lem:EllDelta}, $\delta$ is bounded and one has a constant $C$ (depending on $\lambda$) s.t. $\sinh(\delta_t) \leq C \delta_t$ for all $t \in [0,1]$.

Let $s >0$ be such that $\delta(s) = 0$. Then $\ell$ is differentiable in $t$ for $t>s$, with
\begin{align*}
\frac{d}{dt}(\ell(t)) \leq  \;& C \Gamma_{s} \left( \lambda_{s,\cdot} - \int_s^{\cdot} \frac{\cosh(\delta_u)}{\ell_u} du \right)(t) \\
\leq\; & C  \Gamma_{s} \left(  \lambda_{s,\cdot} - - \frac{1}{  \ell_t} (\cdot - s) \right)(t) \\
 \leq\; & {\color{black}C \sup_{u \in [s,t]} \left( \omega(t-u) - \frac{t-u}{\ell_t} \right)}\\
\leq \; & C \theta_\omega( \ell(t)). 
\end{align*}
where in the second inequality, we have used that $\Gamma_{s}(f) \leq \Gamma_{s}(g)$ if $(g-f)$ is nondecreasing and $(g-f)(s)=0$. This implies that for each $0< s \leq t$ with $\delta(s)=0$, one has
$$t \geq s + \int_{\ell(s)}^{\ell(t)} \frac{du}{C \theta_{\omega}(u)}.$$
On the other hand, by Lemma \ref{lem:ZEll}, there exists $s_n \to 0$ with $\delta(s_n) = 0$. We therefore obtain that
$$\forall t >0, \;\; t \geq \int_{0}^{\ell(t)} \frac{du}{C\theta_{\omega}( u)} = +\infty,$$
a contradiction.


We then treat the case where Osgood's condition does not hold, and want to find a nonzero solution to \eqref{eq:elldelta} starting from $0$. 

{\color{black}
We first claim that for some $K>0$ it holds that, for all $\varepsilon>0$ small enough,
\begin{equation} \label{eq:ineqthetaint}
 \int_0^{\varepsilon \theta_\omega(\varepsilon)} \left(\omega(r) - \frac{2r}{\varepsilon} \right)_+ dr \geq K^{-1} \varepsilon \theta_\omega(\varepsilon)^2.
\end{equation}

Recall that we assume that for some $\kappa>0$ it holds that
\begin{equation*} \label{eq:asnom}
\liminf_{\eta \to 0} \frac{2 \omega(\eta)}{\omega(2\eta)} > 1 + 2 \kappa,
\end{equation*}
and let $\rho(\varepsilon):= \sup \{\eta >0, \varepsilon \omega(\eta) \geq \eta \}$, then (for $\varepsilon$ small enough) 
$$\varepsilon \theta_\omega(\varepsilon) \geq \varepsilon\omega(\rho(\varepsilon)/2) - \frac{\rho(\varepsilon)}{2} \geq {\kappa} \varepsilon \omega(\rho(\varepsilon))  = \kappa \rho(\varepsilon),$$
and in particular 
$$\theta_\omega(\varepsilon) \leq \omega(\rho(\varepsilon)) \leq \omega\left(\kappa^{-1} \varepsilon \theta_\omega(\varepsilon) \right) \leq M \omega\left( \varepsilon \theta_\omega(\varepsilon) \right)$$
(where $M$ is any integer greater than $\kappa^{-1}$). Then noting that if $s \in [r/2, r]$ for $r\geq 0$ one has $\omega(s) - \frac{2s}{\varepsilon} \geq \frac{\omega(r)}{2} - \frac{2r}{\varepsilon}$, we obtain 
$$ \int_0^{\varepsilon \theta_\omega(\varepsilon)} \left(\omega(r) - \frac{2r}{\varepsilon} \right)_+ dr \geq \sup_{ r \in [0, \varepsilon \theta_\omega(\varepsilon)]} \frac{r}{4} \left(\omega(r) - \frac{4r}{\varepsilon}\right).$$
Let $n \geq 1$ be fixed such that
$$(1+2 \kappa)^n \geq 5 M,$$
then one has
\begin{align*} \int_0^{\varepsilon \theta_\omega(\varepsilon)} \left(\omega(r) - \frac{2r}{\varepsilon} \right)_+ dr& \geq 2^{-2-n} \varepsilon \theta_\omega(\varepsilon) \left( \omega(2^{-n} \varepsilon \theta_\omega(\varepsilon)) - 2^{2-n} \theta_\omega(\varepsilon) \right) \\
&\geq 2^{-2-2n} \varepsilon \theta_\omega(\varepsilon) \left( (1+2 \kappa)^n M^{-1} \theta_\omega(\varepsilon) - 4 \theta_\omega(\varepsilon) \right) \\
&\geq 2^{-2-2n} \varepsilon \theta_\omega(\varepsilon)^2,
\end{align*}
which concludes the proof of \eqref{eq:ineqthetaint}.
}


We will then follow a discretization procedure. We start from  $\varepsilon_0=1$, and given $\varepsilon_k$, 
there exists a unique $\varepsilon_{k+1}$ such that

\begin{equation}
\varepsilon_k = \varepsilon_{k+1} + \frac{1}{2 K} \theta_\omega(\varepsilon_k)  \theta_\omega\left(\varepsilon_{k+1}\right) \varepsilon_{k+1}.
\end{equation}
{\color{black}
Then $\varepsilon_k$ decreases to a limit $\underline{\varepsilon}$ as $k \to \infty$, with $\underline{\varepsilon} = \underline{\varepsilon} + \frac{1}{2 K} \theta_\omega(\underline{\varepsilon})^2 \underline{\varepsilon}$, so that, since $\theta_\omega$ is strictly positive on $(0,+\infty)$, it holds that actually
\[\lim_{k \to \infty} \varepsilon_k = 0. \]
In addition, since $\theta_\omega(\varepsilon_k) \to 0$ as $k \to \infty$ one has that 
\begin{equation} \label{eq:equivTheta}
 \theta_\omega(\varepsilon_{k+1}) \sim \theta_\omega(\varepsilon_{k}) \mbox{ as } k \to \infty.\
 \end{equation}
 }
 

We let $\eta_{k+1} := \theta_\omega\left(\varepsilon_{k+1}\right) \varepsilon_{k+1}$ and
note that since $\theta_\omega$ is nondecreasing, one has
\begin{equation}
\int_{\varepsilon_{k+1}}^{\varepsilon_k} \frac{ds}{\theta_\omega(s)} \geq \frac{1}{2 K}  \eta_{k+1},
\end{equation}
which clearly implies that $\sum_{k\geq 0} \eta_k < \infty$. Changing $\varepsilon_0$ if necessary we may assume that this sum is less than $\frac 1 2$.
Then we let $t_k$ converging to $0$ with $t_{k} - t_{k+1} = 2 \eta_{k+1}$, and let $\lambda$ be defined by :
\[
\begin{array}{cccc}
 \lambda(t_{k+1} + r) &=& \omega(r), &0 \leq r \leq \eta_{k+1},  \\
 \lambda(t_{k+1} + \eta_{k+1} + r) &= &\omega(\eta_{k+1} - r),  &0 \leq r \leq \eta_{k+1}. 
 \end{array}
\]
Then $\lambda$ is in $\C_\omega$.

We note that for any solution of \eqref{eq:elldelta} with $\delta_0=0$, one has $\cosh(\delta(t)) \leq C(t)$ on $[0,1]$ for some $C(t) \to 1$ as $t \to 0$. Considering a smaller interval if necessary we may assume that $C(\cdot) \leq 2$. 

We then note that if $(\ell,\delta)$ is a solution to \eqref{eq:elldelta} with $\delta(0)=0$ such that $\ell(t_{k+1}) \geq \varepsilon_{k+1}$, then
{\color{black}
\begin{align*}
\ell(t_k) \geq \ell(t_{k+1} + \eta_{k+1}) &= \ell(t_{k+1}) + \int_{t_{k+1}}^{t_{k+1} + \eta_{k+1}} \sinh(\delta(s)) ds \\
&\geq \varepsilon_{k+1} + \int_{t_{k+1}}^{t_{k+1} + \eta_{k+1}} \delta(s) ds
\end{align*}
and since for $s \in [t_{k+1},t_{k+1} + \eta_{k+1}]$ one has
\[
\delta(s) \geq \delta(t_{k+1}) + \lambda_{t_{k+1},s} - \int_{t_{k+1}}^s \frac{\cosh(\delta(u))}{\ell(u)} du \geq \omega(s-t_{k+1}) - \frac{2 (s-t_{k+1})}{\varepsilon_{k+1}},
\]
we obtain
}
\begin{align*}
\ell(t_k) &\geq \varepsilon_{k+1} + \int_0^{\eta_{k+1}} \left( \omega(r) - \frac{ 2 r}{\varepsilon_{k+1}}\right)_+ dr \\
& \geq  \varepsilon_{k+1} + \frac{1}{K} \eta_{k+1} \theta_\omega(\varepsilon_{k+1}) \\
&\geq \varepsilon_{k+1} + \frac{1}{2 K} \eta_{k+1} \theta_\omega(\varepsilon_{k}) \\
&= \varepsilon_k,
\end{align*}
at least for $k$ large enough, where we have used \eqref{eq:ineqthetaint} in the second inequality and \eqref{eq:equivTheta} in the third one.

{\color{black}
We have therefore proven the existence of $k_0$ such that for any solution $(\ell,\delta)$ to \eqref{eq:elldelta} with $\delta(0)=0$, it holds that 
\begin{equation} \label{eq:k0}
\mbox{if } \ell(t_k) \geq \varepsilon_k \mbox{ for some }k \geq k_0, \mbox{ then  }\ell(t_j) \geq \varepsilon_j, \;\mbox{ for all }k_0 \leq j \leq k .
\end{equation}
}
For $\gamma>0$ we then let $Z^\gamma$ be the unique solution to \eqref{eq:RRDEdt} starting from $(0,\gamma)$. By Lemma \ref{lem:ZEll} (5), we can find a subsequence $\gamma' \to 0$ with $Z^{\gamma'}$ converging to a solution $Z$ of \eqref{eq:RRDEdt} starting from $(0,0)$. 
{\color{black}
Let $(\delta^{\gamma'},\ell^{\gamma'}) = \Psi(Z^{\gamma'})$, where $\Psi$ is defined in section \ref{sec:CV}. Then $(\delta^{\gamma'},\ell^{\gamma'})$ is a solution to \eqref{eq:elldelta} with $(\delta^{\gamma'},\ell^{\gamma'}) = (0,\gamma')$. Since $\varepsilon_k \to 0$, there exists $k_{\gamma'}$ such that $\varepsilon_k \leq \gamma'$ for all $k \geq k_{\gamma'}$, and then for each $k \geq k_{\gamma'}$, one has that $\ell^{\gamma'}(t_k) \geq \ell^{\gamma'}(0) \geq \varepsilon_k.$ Applying \eqref{eq:k0}, we obtain that
\[\forall \gamma', \forall k\geq k_0, \;\;\; \ell^{\gamma'}(t_k) \geq \varepsilon_k\]
and letting $\gamma \to 0$ we obtain that $Z(t_k) \neq 0$ for each $k \geq k_0$. Since $t_k \to 0$ as $k \to \infty$, this implies that $Z(t) \neq 0$ for each $t>0$.
}
\end{proof}

%

We now pass to the proof of non-uniqueness in the case of fBm with $H< \frac{1}{2}$.

\begin{proof}[Proof of Theorem \ref{thm:fbm} (2)]
Step 1. We first construct a.s. a solution $Z$ with $Z_0 = 0$ and $Z \neq 0$ on $(0,1]$. We fix a sequence of times $t_k \downarrow 0$, with $t_k-t_{k+1} \sim k^{-\gamma}$, where 
\begin{equation} \label{eq:cond1}
\gamma>1
\end{equation}
will be fixed later on.

We fix an initial condition $\ell_0 > 0$, $\delta_0=0$ and as in the previous proof we want to obtain an a.s. positive lower bound on $\ell(t)$ for $t>0$, which does not depend on $\ell_0$.

For $k \geq 1$ define the event
\[A_k := \Bigg\{  \mu \left(  \left\{ t \in [t_{k+1},t_k], \;\; \left( \lambda(t) - \min_{t_{k+1}\leq s \leq t} \lambda(s)\right) dt \geq k^{-\theta} \right\} \right) \geq k^{-\nu}, \Bigg\}\]
where $\mu$ denotes Lebesgue measure, for some $\theta, \nu$ satisfying
\begin{equation} \label{eq:cond2}
\nu > \frac{\theta}{H} > \gamma.
\end{equation}
Note that by the scaling property of fBm together with the small ball estimate Lemma \ref{lem:sbI} below, one has that $1 - \P^H(A_k) \leq c\exp(-c k^{\delta'})$ for some $\delta'>0$, so that by the Borel-Cantelli lemma, almost surely $A_k$ holds for $k$ large enough.

Now we claim that for $k$ large enough, it holds that for any solution $(\ell,\delta) $ to \eqref{eq:RRDEdt} with $\delta(0)=0$, 
\begin{equation*} \label{eq:claimAk}
\mbox{ on } A_k {\color{black} \cap \left\{ \sup_{s\leq t \in[0,t_k]} \cosh(|\lambda_{s,t}|)  \leq 2 \right\} } , \;\;\; \mbox{ } \ell(t_{k+1}) \geq (k+1)^{-\eta} \Rightarrow \ell(t_{k}) \geq k^{-\eta} ,
\end{equation*}
for suitably chosen $\eta$. Indeed, note that on $A_k$, if $\ell(t_{k+1}) \geq (k+1)^{-\eta}$, then
\begin{align*} \int_{t_{k+1}}^{t_k} \Gamma_0\left( \lambda_{0,\cdot} - 2  \int_0^\cdot \frac{du}{\ell_u}
\right)(s) ds &\geq  \left( k^{-\theta} -  2 k^{-\gamma}(k+1)^{\eta}\right) k^{-\nu} \end{align*}
so that
$$\ell(t_k) \geq (k+1)^{-\eta} + k^{-\theta -\nu} -2  k^{-\gamma + \eta -\nu} \geq {\color{black}   (k+1)^{-\eta} + \eta k^{-\eta-1} }  \geq k^{-\eta}$$
for $k$ large enough as long as
\begin{equation} \label{eq:cond3}
\theta + \nu < \eta+1, \;\;  \theta < \gamma- \eta.
\end{equation}
To conclude, it suffices to remark that if $H \in (0,1/2)$, it is possible to find $\gamma, \eta, \nu, \theta$ satisfying \eqref{eq:cond1},\eqref{eq:cond2} and \eqref{eq:cond3} (take $\eta \in (H,1-H)$ and $\gamma = 1+\epsilon$, $\theta = H(1+2\epsilon)$, $\nu = 1+3 \epsilon$ for small positive $\epsilon$).

\smallskip
Hence almost surely, we have a lower bound on $\ell(t_k)$ which does not depend on the initial condition. As in the proof of Theorem \ref{thm:cedt}, this implies the existence of a solution $Z^0$ with $Z^0(0)= 0$ but $Z^0(t) \neq 0$ for each $t>0$.

Step 2. We then show that almost surely, we may apply the previous construction to escape from $0$ at each $t \in [0,1]$.  To that end, take $\gamma, \theta, \nu$ as before and let 
\[\tilde{A}_k := \Bigg\{ \forall 0 \leq s \leq 1, \;\;\; \mu\left( \left\{ t \in [s, s+k^{-\gamma}], \;\; \left( \lambda(t) - \min_{s\leq u\leq t} \lambda(u)\right) dt \geq k^{-\theta} \right\} \right) \geq k^{-\nu}, \Bigg\},\]
Note that
$\tilde{A}_k \subset \cap_{j=0}^{ 2 k^\gamma} \hat{A}_{j,k}$,
where
\[ \hat{A}_{k,j} :=  \Bigg\{  \mu\left( \left\{ t \in [s_{j,k},s_{j+1,k}], \;\; \left( \lambda(t) - \min_{s_{j,k} \leq u\leq t} \lambda(u)\right) dt \geq k^{-\theta} \right\} \right) \geq k^{-\nu}, \Bigg\},\]
\[ s_{j,k} = j  \cdot \frac{k^{-\gamma}}{2}, j \geq 0.\]
By Lemma \ref{lem:sbI}, one has that $1 - \P^H(\tilde{A}_k) \leq  c k^\gamma \exp(-c k^{\delta''})$ for some $\delta''>0$, so that again by Borel-Cantelli, almost surely $\tilde{A}_k$ holds for $k$ large enough. This means that almost surely we may apply the construction of Step 1. at each $t \in [0,1)$ simultaneously to obtain solutions with $Z^t(u) = 0$ iff $u \in [0,t]$.\end{proof}

\section{Gaussian computations}

\subsection{Proof of \eqref{eq:asSN}} \label{sec:ProofSN}~\\

We start with some notations. We recall that the fractional Brownian motion of Hurst index $H \in (0,1)$ is a continuous centered Gaussian process $(B_t)_{t\in[0,1]}$ with covariance function given by
$$R(s,t) = \E \left[ B_s B_t \right]= \frac{1}{2} \left( t^{2H} + s^{2H} -  |t-s|^{2H}\right).$$
We consider the Hilbert space $\cH$ obtained by completing linear combinations of indicator functions of intervals under the norm induced by the scalar product $\left\langle\cdot,\cdot\right\rangle_{\cH}$, where 
$$\left\langle 1_{[0,s]}, 1_{[0,t]} \right\rangle_{\cH} = R(s,t) .$$

We then define smooth random variables as variables of the form $F = f\left(B(t_1), \ldots, B(t_k)\right)$ for some smooth scalar function $f$ with bounded derivatives, and for such $F$, we define its Malliavin derivative $DF$ as the $\cH$-valued random variable
$$DF = \sum_{j=1}^k \frac{\partial f}{\partial x_j} \left(B(t_1), \ldots, B(t_k)\right) 1_{[0,t_j]}. $$
We let $\mathbb{D}^{1,2}$ be obtained by completing smooth random variables under the norm $\| \cdot \|_{\mathbb{D}^{1,2}}$ where
$$\left\| F \right\|_{\mathbb{D}^{1,2}}^2 := \E \left[ | F |^2 + \left\|DF\right\|^2_{\cH} \right],$$
and note that $D$ may be extended to $\mathbb{D}^{1,2}$ (with values in $L^2$) continuously.

We will use Gaussian concentration of measure in the following form due to \"Ust\"unel (see \cite[Theorem VIII.1]{Ust95}).

\begin{proposition} \label{prop:ust}
Let $F \in \mathbb{D}^{1,2}$ be such that $\|DF\|_{\cH} \leq M < \infty$ almost surely. Then it holds that
\begin{equation*}
\forall x \in \R, \;\;\;\P\left( \left|F-\E[ F] \right| \geq  x \right) \leq 2 \exp\left(-\frac{x^2}{2M^2}\right).
\end{equation*}
\end{proposition}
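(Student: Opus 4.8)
The plan is to reduce the two-sided tail bound to a sharp sub-Gaussian estimate on the moment generating function, namely $\E[\exp(\lambda(F-\E[F]))] \leq \exp(\lambda^2 M^2/2)$ for every $\lambda \geq 0$. Once this is available, the exponential Chebyshev inequality gives $\P(F-\E[F] \geq x) \leq \exp(-\lambda x + \lambda^2 M^2/2)$, and optimizing over $\lambda \geq 0$ (taking $\lambda = x/M^2$) yields $\exp(-x^2/(2M^2))$; applying the same bound to $-F$, whose Malliavin derivative also satisfies $\|D(-F)\|_{\cH} \leq M$, controls the lower tail, and a union bound produces the factor $2$. So I may assume $\E[F]=0$ and focus on the MGF.

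To bound the MGF I would set up a differential inequality for $\Lambda(\lambda) := \log \E[\exp(\lambda F)]$, which satisfies $\Lambda(0)=0$ and $\Lambda'(\lambda) = \E[F e^{\lambda F}]/\E[e^{\lambda F}]$. The key identity is the covariance representation coming from the Ornstein--Uhlenbeck semigroup $(P_t)_{t \geq 0}$ (Mehler's formula) and the intertwining relation $D P_t = e^{-t} P_t D$: for centered $F$ and suitable $G$,
\begin{equation*}
\E[F G] = \int_0^\infty e^{-t}\, \E\big[\langle DF, P_t\, DG\rangle_{\cH}\big]\, dt .
\end{equation*}
Taking $G = e^{\lambda F}$ and using the chain rule $D(e^{\lambda F}) = \lambda e^{\lambda F} DF$, together with the pointwise bound $\|DF\|_{\cH} \leq M$ and the conditional Jensen inequality $\|P_t V\|_{\cH} \leq P_t\|V\|_{\cH}$ (valid since $P_t$ is an averaging operator), I would estimate $\langle DF, P_t(e^{\lambda F} DF)\rangle_{\cH} \leq M^2 P_t(e^{\lambda F})$, so that for $\lambda \geq 0$ one has $\E[F e^{\lambda F}] \leq \lambda M^2 \E[e^{\lambda F}]\int_0^\infty e^{-t}\,dt = \lambda M^2 \E[e^{\lambda F}]$. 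This gives $\Lambda'(\lambda) \leq \lambda M^2$, and integrating from $0$ produces the desired $\Lambda(\lambda) \leq \lambda^2 M^2/2$.

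The step I expect to require the most care is not the formal computation above but its justification for a general $F \in \mathbb{D}^{1,2}$ with only $\|DF\|_{\cH} \leq M$ almost surely: the exponential is not globally Lipschitz, so $e^{\lambda F}$ need not a priori lie in $\mathbb{D}^{1,2}$, and both the covariance representation and the chain rule must be justified by approximation. Concretely, I would first prove the bound for smooth cylindrical functionals $F_n = f_n(B(t_1),\dots,B(t_{k_n}))$ with $\|DF_n\|_{\cH} \leq M$ (obtained by mollifying and, if needed, composing with a smooth truncation so the gradient constraint is preserved up to a vanishing error), replace $e^{\lambda F_n}$ by $\phi_N(F_n)$ for a bounded smooth increasing $\phi_N$ approximating $x \mapsto e^{\lambda x}$ so that all quantities lie in $\mathbb{D}^{1,2}$, derive the differential inequality at this regularized level, and then let $N \to \infty$ and $n \to \infty$, using that $\|DF\|_{\cH} \leq M$ already forces the uniform sub-Gaussian integrability needed to pass to the limit. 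An entirely parallel and equally standard route would be to invoke the Gaussian logarithmic Sobolev inequality in its Malliavin form, $\mathrm{Ent}(e^{\lambda F}) \leq \frac{1}{2}\,\E[\|D(\lambda F)\|_{\cH}^2\, e^{\lambda F}]$, and run the Herbst argument on $\lambda \mapsto \log \E[e^{\lambda F}]/\lambda$; this leads to the same sharp constant $M^2/2$ and faces the same approximation issues.
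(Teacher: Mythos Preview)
Your proof sketch is correct and follows one of the standard routes to this classical Gaussian concentration inequality (covariance representation via the Ornstein--Uhlenbeck semigroup, or equivalently the log-Sobolev/Herbst argument). However, there is nothing to compare against: the paper does not prove Proposition~\ref{prop:ust} at all. It is stated as a known result and attributed to \"Ust\"unel with a citation to \cite[Theorem VIII.1]{Ust95}; the paper simply invokes it as a black box in the subsequent Corollary. So your proposal is not so much an alternative to the paper's proof as a reconstruction of the cited result, and as such it is sound.
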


\begin{corollary} \label{lem:conc2}
Let $G \in \mathbb{D}^{1,2}$, $G> 0$ a.s. with $\E[G] \leq 1$, and assume that there exists $M <\infty$ s.t.
$$ \P-\mbox{a.s.}, \;\;\; \left\|DG\right\|^2_{\cH} \leq M  G.$$
Then it holds that 
\begin{equation*}
\forall x >0, \;\;\;\P\left( \left|G-\E[ G] \right| \geq  x \right) \leq C \exp\left(-\frac{x}{C M}\right) + C \exp\left(-\frac{x^2}{C M}\right) 
\end{equation*}
for some constant $C>0$.
\end{corollary}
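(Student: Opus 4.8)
The plan is to reduce the statement to the Gaussian concentration of Proposition \ref{prop:ust} applied to the square root $F := \sqrt{G}$, and then transfer the resulting subgaussian tail of $F$ back to $G$. The heuristic is that the hypothesis $\|DG\|_{\cH}^2 \leq MG$ is exactly the self-bounding condition making $\sqrt{G}$ Lipschitz in the Malliavin sense: formally, by the chain rule $D\sqrt{G} = \frac{1}{2\sqrt{G}} DG$, so that
\[ \left\| D\sqrt{G} \right\|_{\cH} \leq \frac{1}{2\sqrt{G}} \sqrt{MG} = \frac{\sqrt{M}}{2} \quad \mbox{a.s.} \]
Since $\sqrt{\cdot}$ fails to be smooth at the origin, I would make this rigorous by working with the regularizations $F_\epsilon := \sqrt{G+\epsilon}$, which are smooth functions of $G$ with bounded derivative; one has $D F_\epsilon = \frac{1}{2\sqrt{G+\epsilon}} DG$, hence $\|DF_\epsilon\|_{\cH} \leq \frac{\sqrt M}{2}$ uniformly in $\epsilon>0$. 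Applying Proposition \ref{prop:ust} to $F_\epsilon$ and letting $\epsilon \downarrow 0$ (using $F_\epsilon \to \sqrt G$ and $\E[F_\epsilon] \to \E[\sqrt G]$ in $L^2$) yields, with $m := \E[\sqrt G]$ and $W := \sqrt G - m$,
\[ \forall y >0, \quad \P\left( |W| \geq y \right) \leq 2 \exp\left( - \frac{2 y^2}{M} \right). \]

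From this subgaussian bound I would first record two consequences: by Jensen, $m^2 \leq \E[G] \leq 1$, so $m \leq 1$; and integrating the tail gives $\E[W^2] = \int_0^\infty 2y\, \P(|W| \geq y)\, dy \leq M$. The key algebraic step is the expansion
\[ G - \E[G] = \left(\sqrt G\right)^2 - \E\left[\left(\sqrt G\right)^2\right] = 2 m W + \left( W^2 - \E[W^2]\right), \]
which expresses the fluctuation of $G$ in terms of the well-controlled variable $W$.

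It then remains to bound the tail. Using $m \leq 1$ and $\E[W^2] \leq M$, for $x \geq 3M$ one has on the event $\{ |G-\E[G]| \geq x\}$ that $2|W| + W^2 \geq x - M \geq \frac{2x}{3}$, which forces either $|W| \geq x/6$ or $|W| \geq \sqrt{x/3}$. Hence
\[ \P\left( |G - \E[G]| \geq x \right) \leq \P\left( |W| \geq \tfrac{x}{6}\right) + \P\left( |W| \geq \sqrt{\tfrac{x}{3}}\right) \leq 2 \exp\left( - \frac{x^2}{18 M}\right) + 2 \exp\left( - \frac{2x}{3M}\right), \]
which is of the claimed form. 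For the remaining range $0 < x < 3M$ the statement is vacuous provided $C$ is chosen large enough that $C e^{-3/C} \geq 1$, since then the right-hand side already exceeds $1 \geq \P(\cdot)$; taking $C$ to be the maximum of this value and the constants appearing above completes the proof.

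The main obstacle I anticipate is purely technical, namely the justification that $\sqrt G \in \mathbb{D}^{1,2}$ together with the chain rule $D\sqrt G = \frac{1}{2\sqrt G} DG$ despite the lack of differentiability of $\sqrt{\cdot}$ at $0$. The regularization $F_\epsilon = \sqrt{G+\epsilon}$ circumvents this, the crucial point being that the self-bounding hypothesis makes the Malliavin derivative bound uniform in $\epsilon$, so that the concentration estimate survives the limit; everything else is a routine splitting of the tail event.
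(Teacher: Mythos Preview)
Your proof is correct and follows essentially the same route as the paper: apply Proposition \ref{prop:ust} to $F=\sqrt{G}$, then expand $G-\E[G]$ in terms of $F-\E[F]$ and split the tail into a subgaussian and a subexponential piece. Your bookkeeping differs only cosmetically (you write the identity $G-\E[G]=2mW+(W^2-\E[W^2])$ and make the case $x<3M$ explicit), and you add the regularization $F_\epsilon=\sqrt{G+\epsilon}$ to justify the chain rule at $0$, a point the paper leaves implicit.
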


\begin{proof}
Applying Proposition \ref{prop:ust} to $F=\sqrt{G}$, we see that 
$$\P\left( \left|F-\E[F] \right| \geq  x \right) \leq 2 \exp\left(-2\frac{x^2}{M}\right).$$
In turn, this implies that
$$\E[G] \leq (\E[F])^2 + C M$$
and we have
$$\left|G-\E[ G] \right| \leq \left|F-\E[ F] \right|^2 + 2 \E[F]  \left| F-\E[F] \right | + C M $$
so that 
$$\P\left( \left|G-\E[ G] \right| \geq  x \right) \leq  \P\left(\left|F-\E[ F] \right|^2 \geq \frac{x}{2} - CM \right) +\P\left(\left|F-\E[ F] \right| \geq \frac{x}{4}  \right) $$
and the result follows.

\end{proof}

We can now proceed to the proof of \eqref{eq:asSN}. Recall that we have $t_k$ decreasing with \\$\delta t_k:=t_{k}-t_{k+1} \sim k^{-\alpha}$ for
\begin{equation*}
\max\left(\frac{5}{12H} ,1 \right)< \alpha < \frac{1}{2H},
\end{equation*}
and  
$$\delta_k = B(t_k) - \min_{[t_{k+1},t_k]} B, \;\; k\geq 0$$
$$S_N  = \sum_{k=0}^N \delta_k^2, \;\;\;\;\;\tilde{S}_N = S_N - \E[S_N].$$

We then claim that 
\begin{equation} \label{eq:concS}
\exists C>0, \;\;\; \forall N \leq M \leq N+N^{2\alpha H}, \forall x \leq 1, \P\left(\left|\tilde{S}_N - \tilde{S}_M\right| \geq x \right) \leq C \exp\left( - C^{-1} N^{2\alpha H - 1/2} x^2\right).
\end{equation}

We first note that for each $k$, $\delta_k$ is in $\D^{1,2}$ with $ D \delta_k = 1_{[t_k^\ast, t_k]}$
where $t_k^\ast \in [t_{k+1},t_k]$ is the (a.s. unique) time where $B$ attains its minimum on $[t_{k+1},t_k]$, cf . \cite{LN03}. It follows that
\begin{align*} 
\left\| D \left(S_M - S_N\right)\right\|_{\cH}^2 &= \sum_{N < k,j \leq M} \delta_k \delta_j \left\langle 1_{[t_k^\ast, t_k]},1_{[t_j^\ast, t_j]}\right\rangle_{\cH} \\
&\leq  \left(S_M - S_N\right)  \left( \sum_{N < k,j \leq M}  \left\langle 1_{[t_k^\ast, t_k]},1_{[t_j^\ast, t_j]}\right\rangle^2_{\cH} \right)^{1/2}
\end{align*}
by the Cauchy-Schwarz inequality. We now bound the sum appearing on the right-hand side. We will write $a \lesssim b$ when $a \leq C b$ for some constant $C$. Note that for $k \geq j+2$, 
\begin{align*}
2 \left\langle 1_{[t_k^\ast, t_k]},1_{[t_j^\ast, t_j]}\right\rangle_{\cH}  &=  |t_j - t_{k}|^{2H} + |t_j^\ast - t_{k}^\ast|^{2H} -|t_j - t_{k}^\ast|^{2H} - |t_j^\ast - t_{k}|^{2H} \\
&\lesssim \int_{t_j^\ast}^{t_j} \left((t_k^\ast-s)^{2H-1} - (t_k - s)^{2H-1} \right) ds \\
&\lesssim (t_k-t_k^\ast) (t_j - t_j^\ast) \left(t_{j+1} - t_k \right)^{2H-2} \\
&\lesssim k^{-\alpha} j^{-\alpha} \left( j^{1-\alpha} - k^{1-\alpha} \right)^{2H-2}.
\end{align*}
We then have
\begin{align}
 \sum_{N \leq k,j < M}  \left\langle 1_{[t_k^\ast, t_k]},1_{[t_j^\ast, t_j]}\right\rangle^2_{\cH}  &\lesssim \sum_{N \leq k} (\delta t_k)^{4H} + \sum_{N \leq j} j^{-2\alpha}\sum_{j+1 < k}  k^{-2\alpha} \left( j^{1-\alpha} - k^{1-\alpha} \right)^{4H-4}. \label{eq:DS}
\end{align}
The first sum is of order $N^{1-4H\alpha}$, whereas the second sum we estimate by splitting the inner sum in two sums depending on whether $k \leq 2j$ or $k>2j$. We have
\begin{align*}
\sum_{j< k \leq 2j}\left(j^{1-\alpha}-k^{1-\alpha}\right)^{4H-4} k^{-2\alpha} &\lesssim j^{-2\alpha} \sum_{j< k \leq 2j} k^{-\alpha(4H-4)} (k-j)^{4H-4} \\
&\lesssim j^{-2\alpha} j^{-\alpha(4H-4)} j^{4H-3} = j^{2 \alpha -4H (\alpha-1) - 3}
\end{align*}
whereas 
\begin{align*}
\sum_{2j< k}\left(j^{1-\alpha}-k^{1-\alpha}\right)^{4H-4} k^{-2\alpha} &\lesssim j^{(1-\alpha)(4H-4)} j^{1-2\alpha} = j^{2\alpha -4H (\alpha-1) - 3}
\end{align*}
so that the second sum in \eqref{eq:DS} admits an upper bound of order $\sum_{j\geq N} j^{-4H (\alpha-1) - 3} \lesssim N^{-4H (\alpha-1)  -2} =~o(N^{1-4H\alpha})$, and we obtain
\[
\left( \sum_{N < k,j \leq M}  \left\langle 1_{[t_k^\ast, t_k]},1_{[t_j^\ast, t_j]}\right\rangle^2_{\cH}\right)^{1/2} \lesssim N^{1/2-2H\alpha}.
\]

Note that $\E[S_M-S_N] \sim c (M^{1-2\alpha H} - N^{1-2\alpha H})$ is bounded if $M \leq N + N^{2\alpha H}$, we can therefore apply Corollary \ref{lem:conc2} to obtain \eqref{eq:concS}.

Then, we let $N_k = k^{\frac{1}{1-2\alpha H}}$ and note that for $N_k \leq N \leq N_{k+1}$
$$\P\left( \left|\tilde{S}_{N_{k+1}} - \tilde{S}_{N} \right| \geq k^{-\gamma} \right) \leq C \exp\left(-C^{-1} k^{-2\gamma + \frac{2\alpha H -1/2}{1-2\alpha H}}\right)$$
so that if 
$$\frac{2\alpha H -1/2}{1-2\alpha H} > 2$$
namely if $ \alpha > \frac{5}{12H},$
one may choose some $\gamma>1$ for which it holds that
$$\sum_{k} \sum_{N_k \leq N \leq N_{k+1}} \P\left( \left|\tilde{S}_{N_{k+1}} - \tilde{S}_{N} \right| \geq k^{-\gamma} \right) < \infty,$$
and we conclude by the Borel-Cantelli lemma.

\vspace{5mm}

\subsection{Small ball properties}~\\

We start by a small ball estimate for reflected fBm in supremum norm.

\begin{lemma} \label{lem:sb}
Let $B$ be the fractional Brownian motion of Hurst index $H \in{\color{black} (0,1/2]}$, then there exists $c>0$ s.t. for all $x > 0$,
\begin{equation}
\P \left( \max_{t \in [0,1]} \left( B(t) - \min_{0\leq s \leq t} B(s)\right) \leq x \right) \leq c \exp\left(-c x^{-1/H}\right) .
\end{equation}
\end{lemma}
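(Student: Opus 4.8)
The plan is to reduce the event to a statement about one-sided increments of $B$ on a fine uniform grid, and then to exploit the negative correlation of fractional Gaussian noise (valid precisely for $H \leq 1/2$) in order to decouple the resulting events.

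First I would observe that, writing $R(t) = B(t) - \min_{0 \leq s \leq t} B(s)$, for any $s \leq t$ one has $\min_{0 \leq u \leq t} B(u) \leq B(s)$, and hence $R(t) \geq B(t) - B(s)$. Fixing an integer $n \geq 1$ and the grid $t_j = j/n$, this gives $\max_t R(t) \geq B(t_{j+1}) - B(t_j)$ for each $j$, so that
$$\Big\{ \max_{t \in [0,1]} R(t) \leq x \Big\} \subseteq \bigcap_{j=0}^{n-1} \big\{ \xi_j \leq x \big\}, \qquad \xi_j := B(t_{j+1}) - B(t_j).$$
It therefore suffices to bound $\P\big(\bigcap_j \{\xi_j \leq x\}\big)$ from above.

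Next I would use a Gaussian comparison argument to handle the dependence. By stationarity of increments and self-similarity, $(\xi_0, \ldots, \xi_{n-1})$ is a centered Gaussian vector with $\mathrm{Var}(\xi_j) = n^{-2H}$ and $\mathrm{Cov}(\xi_i, \xi_j) = n^{-2H}\, \rho(|i-j|)$, where $\rho(k) = \tfrac12\big(|k+1|^{2H} - 2|k|^{2H} + |k-1|^{2H}\big)$ is the fractional Gaussian noise autocovariance. The key point is that for $H \in (0, 1/2]$ one has $\rho(k) \leq 0$ for all $k \geq 1$ (since $k \mapsto k^{2H}$ is concave, with equality throughout when $H = 1/2$). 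Comparing with a vector $Y = (Y_j)$ of independent Gaussians with the same marginal variances, the covariances of $\xi$ are pointwise $\leq$ those of $Y$ off the diagonal, so Slepian's inequality yields
$$\P\Big( \bigcap_{j} \{\xi_j \leq x\} \Big) \leq \prod_{j} \P\big( \xi_j \leq x \big).$$
Finally I would fix the scale: each factor equals $\Phi\big(x\, n^{H}\big)$ with $\Phi$ the standard Gaussian c.d.f., and taking $n = \lceil x^{-1/H}\rceil$ keeps $x\, n^{H}$ bounded by an absolute constant, so that each factor is at most some $p < 1$. Hence $\P(\max_t R(t) \leq x) \leq p^{\,n} \leq \exp(-c\, x^{-1/H})$, as claimed.

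The main obstacle is the correlation between the increments $\xi_j$: for genuinely independent increments the product bound would be immediate, so the whole difficulty is concentrated in justifying the decoupling. This is exactly where the hypothesis $H \leq 1/2$ enters, as it guarantees the nonpositivity of $\rho(k)$ and hence the applicability of the normal comparison; for $H > 1/2$ the increments are positively correlated and the argument would break down, which is consistent with the statement being restricted to $H \leq 1/2$.
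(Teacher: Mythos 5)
Your proposal is correct and follows essentially the same route as the paper's proof: discretize onto a grid of mesh $n^{-1}$ with $n \approx x^{-1/H}$, bound the running maximum event by the intersection of increment events, decouple via Slepian's lemma using the negative correlation of fBm increments for $H \leq 1/2$, and conclude by scaling so that each factor is a fixed $p<1$. The only difference is cosmetic: you spell out the nonpositivity of the fGn autocovariance via concavity of $k \mapsto k^{2H}$, which the paper simply recalls as a known fact.
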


\begin{proof}
The proof follows by considering increments over small intervals as in \cite{MR95}. Namely, given $x>0$ we let $x^{-1/H} \leq n \leq x^{-1/H} +1$ and obtain
\begin{align*}
\P \left( \sup_{t \in [0,1]} \left( B(t) - \min_{0\leq s \leq t} B(s)\right) \leq x \right) &\leq \P \left(\forall 1\leq k \leq n,  B\left(\frac{k}{n}\right) - B\left(\frac{(k-1)}{n}\right) \leq x \right) \\
&\leq \Pi_{k=1}^n \P \left(B\left(\frac{k}{n}\right) - B\left(\frac{(k-1)}{n}\right) \leq x \right) \\
& \leq \left(\int_{-\infty}^{C} \frac{e^{-y^2/2}  dy}{\sqrt{2\pi}} \right)^n \\
&\leq c \exp(-c x^{-1/H}),
\end{align*}
where $C,c$ only depend on $H$, {\color{black}where we have used Slepian's lemma in the second inequality (recall that fBm has negatively correlated increments for $H \leq \frac 1 2$), and fBm scaling in the third inequality.}
\end{proof}

The previous lemma allows us to deduce another small ball bound on $L^1$-type information. We do not expect this result to be sharp but it will be sufficient for our purposes. 

\begin{lemma} \label{lem:sbI}
Let $B$ be the fractional Brownian motion of Hurst index $H \in (0,1/2]$, then for all $0<\kappa < H$ there exists $c>0$ s.t. for all $x, y> 0$,
\begin{equation*} \label{eq:sbI}
\P \Bigg( \mu\left(  \left\{ t \in [0,1], \;\; \left( B(t) - \min_{0\leq s \leq t} B(s)\right)  \geq x \right\} \right) \leq y \Bigg) \leq c \exp\left(-c x^{-1/H}\right) + c \exp\left( -c x^2 y^{-2\kappa}\right) ,
\end{equation*}
where $\mu$ denotes Lebesgue measure.
\end{lemma}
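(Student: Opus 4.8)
The plan is to reduce the statement to two ingredients that are genuinely different in nature: the small-ball lower bound already proved in Lemma \ref{lem:sb}, and an \emph{upper} tail bound for the modulus of continuity of $B$. The bridge between them will be a purely deterministic ``excursion'' argument. Throughout, write $\xi_t := B_t - \min_{0\le s \le t} B_s$ for the reflected path and $W := \mu(\{t \in [0,1] : \xi_t \ge x\})$, so that we must bound $\P(W \le y)$.

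First I would establish the deterministic inclusion
\[
\{W \le y\} \subseteq \Big\{ \max_{[0,1]} \xi < 2x \Big\} \cup \Big\{ \exists\, 0 \le a \le t \le 1,\ t - a \le y,\ B_t - B_a \ge x \Big\}.
\]
Indeed, suppose $W \le y$ and $\max_{[0,1]}\xi \ge 2x$, attained at some $t^\ast$. Let $[a,t^\ast]$ be the part to the left of $t^\ast$ of the connected component of the closed set $\{\xi \ge x\}$ containing $t^\ast$; since $\xi_0 = 0 < x$ one has $a > 0$ and, by continuity, $\xi_a = x$. On $[a,t^\ast]$ the running minimum $\min_{[0,\cdot]}B$ is constant, since it can only decrease at times where $\xi = 0$, whereas $\xi \ge x > 0$ here; consequently $B_t - B_a = \xi_t - \xi_a$ for $t \in [a,t^\ast]$. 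In particular $B_{t^\ast} - B_a = \xi_{t^\ast} - x \ge x$, while $t^\ast - a \le \mu([a,t^\ast]) \le W \le y$, which proves the inclusion.

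Next I would estimate the two events probabilistically. The first is handled immediately by Lemma \ref{lem:sb} applied at level $2x$, giving $\P(\max_{[0,1]}\xi < 2x) \le c\exp(-c'x^{-1/H})$, which is the first term of the claim. For the second event I would cover $[0,1]$ by roughly $1/y$ intervals of length $2y$, chosen so that every pair $(a,t)$ with $t-a \le y$ lies in one of them; by fBm scaling the oscillation of $B$ over such an interval is distributed as $(2y)^H$ times the oscillation of a standard fBm over $[0,1]$, which has a Gaussian tail $\le C\exp(-cz^2)$ (obtained, for instance, by applying Proposition \ref{prop:ust} to $\sup_{[0,1]}B$ and to $-\inf_{[0,1]}B$, whose Malliavin derivatives have $\cH$-norm at most $1$). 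Each interval thus contributes at most $C\exp(-cx^2 y^{-2H})$, and a union bound yields
\[
\P\Big( \exists\, t - a \le y,\ B_t - B_a \ge x \Big) \le \frac{C}{y}\exp\!\big(-c\,x^2 y^{-2H}\big).
\]
Combining the two bounds finishes the proof, modulo cleaning up the prefactor.

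The hard (or at least only delicate) part will be this last cleanup: turning the naive union bound, which carries the sharp exponent $y^{-2H}$ together with a polynomial-in-$1/y$ loss, into the single clean exponential $c\exp(-cx^2 y^{-2\kappa})$. The point is to absorb $1/y = \exp(\log(1/y))$ into the exponent at the cost of weakening $H$ to any $\kappa < H$: when $x^2 y^{-2H} \gtrsim \log(1/y)$ this is immediate since $y^{-2H} \ge y^{-2\kappa}$, whereas in the opposite regime $x^2 y^{-2\kappa} \lesssim y^{2H - 2\kappa}\log(1/y)$ stays bounded, so the asserted bound holds trivially (its right-hand side being bounded below) after adjusting constants. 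This loss of $\kappa < H$ is exactly the source of the non-sharpness flagged before the statement, and it is where all the slack in the argument is spent; one must also keep an eye on the boundary cases of the excursion argument (for instance when the relevant component of $\{\xi \ge x\}$ abuts the endpoint $1$), but these are routine.
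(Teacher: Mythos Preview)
Your proof is correct and follows the same two-event decomposition as the paper: either $\max_{[0,1]}\xi < 2x$ (handled by Lemma~\ref{lem:sb}), or $B$ has a large increment over a short interval (handled by a Gaussian tail bound on the modulus of continuity). The deterministic excursion argument you give is exactly the one implicit in the paper.

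The only difference is in how the second event is estimated. The paper works directly with the $\kappa$-H\"older norm: it observes that if $\max_{[0,1]}\xi \geq 2x$ and $\|B\|_\kappa \leq C$, then (since $\xi$ inherits the H\"older modulus of $B$) one has $W \geq (x/C)^{1/\kappa}$; taking $C = x y^{-\kappa}$ and using the Gaussian tail of $\|B\|_\kappa$ gives $c\exp(-c x^2 y^{-2\kappa})$ immediately, with no prefactor to clean up. Your route---a union bound over $\sim 1/y$ intervals of length $2y$, yielding the sharper exponent $2H$ but a polynomial loss $1/y$, then absorbing the loss by weakening $H$ to $\kappa$---arrives at the same place with a little more bookkeeping. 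What your version buys is transparency about where the slack goes: it makes explicit that the ``honest'' exponent is $2H$ and that $\kappa<H$ is spent entirely on the cosmetic step. The paper's version is shorter because it bakes that trade-off into the choice of norm from the start.
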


\begin{proof}
Fix $0< \kappa < H$ and note that
\begin{align*} \max_{t \in [0,1]} \left( B(t) - \min_{0\leq s \leq t} B(s) \right) \geq 2x, &\;\; \|B\|_{\kappa} \leq C \\
& \Rightarrow  \mu\left(  \left\{ t \in [0,1], \;\; \left( B(t) - \min_{0\leq s \leq t} B(s)\right) dt \geq x \right\} \right) \geq x^{1/\kappa} C^{-1/\kappa}, 
\end{align*}
where $\|B\|_{\kappa}$ is the $\kappa$-H\"older norm of $B$ on $[0,1]$. Hence taking $C=x  y^{-\kappa}$, and using Lemma \ref{lem:sb} as well as Gaussian tails of $\|B\|_{\kappa}$, we obtain the result.
\end{proof}

\bibliographystyle{plain}
\bibliography{reflected}
\end{document}